\documentclass[12pt, a4paper]{amsart}
\usepackage[bookmarksopen=true]{hyperref}

\usepackage[english]{babel}
\usepackage{amssymb}
\usepackage{mathrsfs}
\usepackage{dsfont}
\usepackage{amscd}
\usepackage{lmodern}
\usepackage{color}

\newtheorem{thmintro}{Theorem}

\newtheorem{corintro}[thmintro]{Corollary}
\newtheorem{conjintro}{Conjecture}
\newtheorem{theorem}{Theorem}[section]
\newtheorem{corollary}[theorem]{Corollary}
\newtheorem{lemma}[theorem]{Lemma}
\theoremstyle{definition}
\newtheorem{remarkintro}{Remark}
\newtheorem{remark}[theorem]{Remark}

\newcommand{\CC}{\mathbf{C}}
\newcommand{\NN}{\mathbf{N}}
\newcommand{\QQ}{\mathbf{Q}}
\newcommand{\RR}{\mathbf{R}}
\newcommand{\ZZ}{\mathbf{Z}}
\newcommand{\BBB}{\mathscr{B}}
\newcommand{\CCC}{\mathscr{C}}
\newcommand{\GGG}{\mathcal{G}}
\newcommand{\FFF}{\mathcal{F}}
\newcommand{\inv}{^{-1}}

\newcommand{\co}{\colon\thinspace}

\DeclareMathOperator{\Aut}{Aut}
\DeclareMathOperator{\rk}{rk}
\DeclareMathOperator{\CAT}{CAT(0)}
\DeclareMathOperator{\Min}{Min}
\DeclareMathOperator{\re}{re}
\DeclareMathOperator{\SL}{SL}
\DeclareMathOperator{\Hom}{Hom}
\DeclareMathOperator{\Lie}{Lie}
\DeclareMathOperator{\ad}{ad}
\DeclareMathOperator{\Res}{Res}
\DeclareMathOperator{\sph}{sph}
\DeclareMathOperator{\St}{St}
\DeclareMathOperator{\proj}{proj}
\DeclareMathOperator{\Conv}{Conv}
\DeclareMathOperator{\FA}{(FA)}
\DeclareMathOperator{\FB}{(FB)}
\DeclareMathOperator{\FBP}{(FB+)}

\begin{document}

\renewcommand{\proofname}{{\bf Proof}}

\title[A fixed point theorem for Lie groups acting on buildings]{A fixed point theorem for Lie groups acting on buildings and applications to Kac--Moody theory}

\author[Timoth\'ee~Marquis]{Timoth\'ee \textsc{Marquis}$^{*}$}
\address{UCL, 1348 Louvain-la-Neuve, Belgium}
\email{timothee.marquis@uclouvain.be}
\thanks{$^*$F.R.S.-FNRS Research Fellow}

%
\begin{abstract}
We establish a fixed point property for a certain class of locally compact groups, including almost connected Lie groups and compact groups of finite abelian width, which act by simplicial isometries on finite rank buildings with measurable stabilisers of points. As an application, we deduce amongst other things that all topological $1$-parameter subgroups of a real or complex Kac--Moody group are obtained by exponentiating $\ad$-locally finite elements of the corresponding Lie algebra. 
\end{abstract}

\maketitle

\section{Introduction}
Recall that a group $H$ is said to satisfy property $\FA$ if every action, without inversion, of $H$ on a simplicial tree has a fixed point. In this paper, we investigate some ``higher rank''  analog of this property, which we define in the class of locally compact groups. More precisely, we say that a locally compact group $G$ has {\bf property $\FB$} if it satisfies the following property:

\begin{itemize}
\item[$\FB$]
\emph{Every measurable action of $G$ on a finite rank building stabilises a spherical residue.}
\end{itemize}

Here, we mean by an {\bf action} of $G$ on a building $\Delta$ a type-preserving simplicial isometric action on $\Delta$. We call such an action {\bf measurable} if the stabilisers in $G$ of the spherical residues of $\Delta$ are Haar measurable. Note that continuous actions are examples of measurable actions. In particular every action of a discrete group is measurable. Thus in the special case of discrete groups, property $\FB$ is a direct analog of property $\FA$, where trees are replaced by arbitrary finite rank buildings. 

In this paper, however, we will focus on a class of non-discrete groups. More precisely, we prove the following theorem.

\begin{thmintro}\label{mainthm}
Let $G$ be an almost connected locally compact group. If $G$ has finite abelian width, then it has property $\FB$.
\end{thmintro}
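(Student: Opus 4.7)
The plan is to work on the complete $\CAT$ realisation of $\Delta$, using the equivalence (valid for finite rank buildings) that a group stabilises a spherical residue if and only if it has a bounded orbit on this realisation. Property $\FB$ thus becomes a bounded-orbit property, and I would proceed by successively reducing from the general almost connected case to compact groups, then to connected Lie groups, and finally exploiting finite abelian width.

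First I would dispatch the compact case: for a compact subgroup $K$ acting measurably, each chamber stabiliser is a measurable subgroup of $K$, and a Steinhaus-type argument combined with compactness of $K$ shows that at least one chamber has an open (finite index) stabiliser, hence a finite orbit. Bruhat--Tits on the CAT(0) realisation then furnishes a $K$-fixed point, which in finite rank is contained in a unique minimal residue that must be spherical and $K$-stabilised. Applying this to a compact normal subgroup $K \trianglelefteq G^0$ provided by the Gleason--Yamabe theorem (so that $G^0/K$ is Lie), the induced $G/K$-action on the convex subcomplex of $K$-stabilised residues reduces the problem to the case where $G$ is an almost connected Lie group of finite abelian width.

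With $G$ now Lie, write $G = A_1 \cdots A_n$ with each $A_i$ closed abelian. A closed connected abelian subgroup of a Lie group is isomorphic to $\RR^d \times T$ with $T$ a torus, and $T$ is handled by the compact case; for a one-parameter subgroup $\RR \to G$, measurability combined with connectedness of $\RR$ constrains the action drastically, since any chamber stabiliser of positive Haar measure must be open and thus all of $\RR$, while the complementary scenario is ruled out by combining the Steinhaus-type dichotomy with the rigidity of simplicial building automorphisms (translation lengths take discrete values, incompatible with a continuous one-parameter family). Thus each closed abelian factor $A_i$ stabilises some spherical residue $R_i$.

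The main obstacle is then combining these stabilisations into a single spherical residue stabilised by all of $G = A_1 \cdots A_n$: since the $A_i$ do not pairwise commute, intersecting fixed-point sets does not suffice. The plan is an induction on the rank of $\Delta$, exploiting that the set of $A_1$-stabilised spherical residues forms an $A_1$-invariant convex subcomplex on which $A_2, \ldots, A_n$ still act; passing to the link of a minimal $A_1$-stabilised residue strictly decreases the rank of the ambient building while preserving measurability and finite abelian width of the remaining group. Carefully establishing this persistence, and ensuring the induction terminates at rank zero where the claim is trivial, is the most delicate point of the argument.
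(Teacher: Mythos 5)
Your overall framing (pass to the Davis realization, turn property $\FB$ into a bounded-orbit statement, conclude via Bruhat--Tits) matches the paper's, but the proposal inverts where the difficulty lies and leaves the genuinely hard step unproved. The step you single out as the ``main obstacle'' --- combining the residues stabilised by the abelian factors $A_1,\dots,A_n$ --- is in fact immediate and needs no induction on rank, no links, and no commutation between the $A_i$: if each $A_i$ has bounded orbits in the complete $\CAT$ realization, then the image of a bounded set under the bounded product $A_1\cdots A_n$ is bounded, so $G$ has bounded orbits and Bruhat--Tits applies. Conversely, the two cases you dispatch in a few lines --- a compact group and $(\RR,+)$ acting measurably --- are the entire content of the theorem, and the arguments sketched for them do not work. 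A Steinhaus-type argument needs a \emph{countable} cover of the group by measurable stabilisers before $\sigma$-additivity can produce one of positive measure, and the set of chambers of a building can be uncountable; nothing in your sketch explains why some stabiliser has positive measure. For $(\RR,+)$, discreteness of translation lengths only yields that each \emph{element} is elliptic (divisible elements have translation length $0$), i.e.\ local ellipticity; it does not exclude a locally elliptic action with unbounded orbits. Serre's construction recalled in the introduction gives exactly such an action of $(\RR,+)$ on a tree: every element fixes a vertex, yet there is no global fixed point, so your dichotomy would ``prove'' a false statement if measurability were not used more globally. The actual argument is: local ellipticity, then the Caprace--Lytchak dichotomy (a fixed point in $X$ or a fixed point $\xi$ in the visual boundary), then induction on dimension through the transversal building at $\xi$ to get a fixed point in the combinatorial bordification; only then does one obtain a cover of $G$ by \emph{countably} many measurable stabilisers (pointwise fixators of combinatorial sectors inside a single apartment, which has only countably many spherical residues), so that $\sigma$-additivity plus the fact that a positive-measure measurable subgroup of an almost connected group has finite index finishes the proof.

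A second, structural problem is the order of your reductions. You invoke Gleason--Yamabe on $G^0$ \emph{before} reducing to the abelian case, so the compact normal subgroup $K$ you must handle is an arbitrary compact group --- and property $\FB$ for arbitrary compact groups is precisely the open Conjecture A of the paper. The correct order is to first use finite abelian width and the bounded-product reduction to assume $G$ abelian, so that the compact group furnished by Gleason--Yamabe is compact \emph{abelian} and falls under the key theorem for compact abelian groups and $(\RR,+)$. Relatedly, when the quotient acts on the fixed-point set of $K$, that set is a closed convex subcomplex of $X$ but not itself a building, so property $\FB$ as stated does not apply to it; one needs the strengthened property $\FBP$ for actions on such subcomplexes. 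These last two points are fixable bookkeeping, but the gap in the compact and $(\RR,+)$ cases is essential.
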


Recall that a topological group $G$ is {\bf almost connected} if its group of components $G/G^0$ is compact. Also, a group $G$ is said to have {\bf finite abelian width} if there exist finitely many abelian subgroups $A_1,\dots,A_N$ of $G$ such that $G=\{a_1\dots a_N \ | \ a_i\in A_i\}$.

Since almost connected Lie groups have finite abelian width (see Remark~\ref{rk Lie faw} below), this implies in particular the following.
\begin{corintro}\label{corintro Lie}
Every almost connected Lie group has property $\FB$.
\end{corintro}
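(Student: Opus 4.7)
The plan is to derive the corollary directly from Theorem~\ref{mainthm}: the hypothesis of almost connectedness is shared, so the only remaining point is to verify that every almost connected Lie group has finite abelian width, which is the content of Remark~\ref{rk Lie faw} below. My proof sketch therefore focuses on that remark.

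I would first reduce to the connected case. Since $G^0$ is open in $G$ and $G/G^0$ is a compact, totally disconnected quotient of a Lie group, it is finite; pick coset representatives $g_1,\dots,g_k$. Suppose $G^0 = A_1\cdots A_N$ with each $A_j$ an abelian subgroup. Any $g\in G$ lies in some $g_iG^0$ and thus factors as $g = g_i a_1\cdots a_N$ with $a_j\in A_j$. Each cyclic subgroup $C_j:=\langle g_j\rangle$ is abelian and contains the identity, so one can write $g = c_1\cdots c_k\cdot a_1\cdots a_N$ with $c_i=g_i$ and $c_j=e$ for $j\neq i$, yielding $G = C_1\cdots C_k\cdot A_1\cdots A_N$, a finite abelian width decomposition of $G$ with $k+N$ factors.

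For a connected Lie group $G^0$, I would fix a basis $X_1,\dots,X_n$ of $\Lie(G^0)$ and set $A_i:=\exp(\RR X_i)$, each an abelian one-parameter subgroup. By the inverse function theorem applied to the map $(t_1,\dots,t_n)\mapsto \exp(t_1X_1)\cdots\exp(t_nX_n)$, the product $A_1\cdots A_n$ contains an open neighbourhood $U$ of the identity. When $G^0$ is compact, the ascending open cover $\{U^m\}_{m\ge 1}$ of $G^0$ has a finite subcover, so $G^0 = U^m$ for some $m$, producing finite abelian width with $mn$ factors. In general, one combines this local argument with a structural decomposition: a Levi decomposition $G^0 = R\cdot S$ into solvable radical and semisimple factor, and on $S$ an Iwasawa-type decomposition $S = KAN$. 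The compact piece $K$ is handled as above; the abelian factor $A$ contributes a single abelian subgroup; and the nilpotent factor $N$, as well as the solvable radical $R$, is decomposed as a product of abelian subgroups using the lower central and derived series respectively.

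The main obstacle I expect is the uniform bound in the non-compact connected case: without compactness one cannot conclude $G^0=U^m$ for any fixed $m$ purely from the local description, and a global structural theorem is required to prevent the number of factors from growing unboundedly across the group.
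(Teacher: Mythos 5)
Your overall route is the one the paper itself indicates: Corollary~\ref{corintro Lie} is deduced by showing that an almost connected Lie group has finite abelian width (Remark~\ref{rk Lie faw}) and then invoking the fixed point theorem for abelian groups. Your reduction to the connected case and your treatment of the compact connected case are both correct; in fact your inverse-function-theorem argument is a clean alternative to the paper's Baire-category argument, which works with the \emph{closures} of the one-parameter subgroups (tori) inside the compact group. (Note that the paper's displayed proof does not pass through Theorem~\ref{mainthm}: it applies Theorem~\ref{thm K} and Lemma~\ref{lemma reduction} directly to the bounded product of one-parameter subgroups supplied by Theorem~\ref{thm Lie}. This is only a difference of packaging, not of substance.)

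Where your sketch genuinely wobbles is the non-compact connected case, which is exactly the step the paper handles by citation rather than by hand: Theorem~\ref{thm Lie} quotes the Iwasawa--Malcev structure theorem (the reference to Borel) that a connected Lie group is the product of a maximal connected compact subgroup and finitely many one-parameter subgroups, reducing everything at once to the compact connected case. Your substitute -- Levi decomposition plus Iwasawa decomposition of the semisimple factor -- has two gaps as written. First, the factor $K$ in $S=KAN$ is compact only when $S$ has finite centre; for the universal cover of $\SL(2,\RR)$, for instance, $K\cong\RR$, so ``handled as above'' does not apply verbatim and one must further split $K$ into a compact semisimple part and a connected abelian part. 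Second, ``decomposed as a product of abelian subgroups using the derived series'' is not yet an argument: the terms of the derived series are normal subgroups with abelian \emph{quotients}, which does not by itself exhibit abelian subgroups whose setwise product is the solvable radical. What is actually needed is coordinates of the second kind -- a chain of codimension-one subalgebras yielding $R=\exp(\RR X_1)\cdots\exp(\RR X_n)$ on the universal cover, after which one passes to the quotient by the (central, discrete) kernel. Both gaps are fixable, but as it stands the non-compact connected case is not proved; citing the Iwasawa--Malcev decomposition, as the paper does, is the efficient way around the obstacle you yourself identify at the end.
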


In another direction, compact groups of finite abelian width (and conjecturally all compact groups) also have property $\FB$.

\begin{corintro}\label{corintro compact}
Every compact group of finite abelian width has property $\FB$. In particular, compact $p$-adic analytic groups and profinite groups of polynomial subgroup growth have property $\FB$.
\end{corintro}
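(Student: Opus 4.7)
The plan is to deduce the corollary directly from Theorem~A after two observations. First, any compact group $G$ is automatically almost connected: the connected component $G^0$ is a closed normal subgroup, so $G/G^0$ is a (Hausdorff) continuous image of the compact group $G$, hence compact. Consequently, the first assertion is an immediate specialisation of Theorem~A to compact groups of finite abelian width.

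The remaining content is the ``in particular'' clause. For a compact $p$-adic analytic group $G$, Lazard's theorem furnishes an open subgroup $U\subseteq G$ that is uniform pro-$p$ of some finite rank $d$. Such a $U$ admits a topological basis $a_1,\dots,a_d$ for which every element is uniquely expressible as $a_1^{x_1}\cdots a_d^{x_d}$ with $x_i\in\ZZ_p$; in particular $U=\langle a_1\rangle\cdots\langle a_d\rangle$ is a product of $d$ procyclic (hence abelian) subgroups. Choosing a finite transversal $\{t_1,\dots,t_n\}$ of $U$ in $G$, each element of $G$ takes the form $t_i u$ with $u\in U$, and $t_i\in\langle t_i\rangle$ lies inside the product $\langle t_1\rangle\cdots\langle t_n\rangle$ (each factor containing the identity). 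Combining, one obtains $G=\langle t_1\rangle\cdots\langle t_n\rangle\cdot\langle a_1\rangle\cdots\langle a_d\rangle$, exhibiting finite abelian width.

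For a profinite group $G$ of polynomial subgroup growth, the Lubotzky--Mann--Segal structure theorem provides an open normal subgroup $U$ of $G$ that is a pro-$\pi$ group of finite rank for some finite set $\pi$ of primes. Such a $U$ is, up to finite index, a direct product of compact pro-$p$ groups of finite rank for $p\in\pi$, each of which is compact $p$-adic analytic and thus of finite abelian width by the preceding paragraph. Since finite abelian width is clearly preserved under finite direct products (concatenate the factorisations) and under passage from a finite-index subgroup to its overgroup (the transversal trick used above), $G$ itself has finite abelian width, and Theorem~A applies.

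The step I expect to be most delicate is the reliance, in the last two paragraphs, on the deep structural results of Lazard and of Lubotzky--Mann--Segal to produce the uniform pro-$p$ or pro-$\pi$ open subgroup; once these are invoked, the combinatorial passage to an explicit abelian-width bound is a routine transversal argument. In a finished write-up, one would most likely cite the existence of finite abelian width in these classes directly from the literature on verbal and word width in compact and profinite groups, rather than reproduce the reduction explicitly.
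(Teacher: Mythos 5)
Your first two steps are fine. That compact groups are almost connected is correct, so the first assertion does follow from Theorem~\ref{mainthm} (the paper instead deduces it directly from Theorem~\ref{thm K} and Lemma~\ref{lemma reduction}, which is the same content, since Theorem~\ref{mainthm} is itself proved that way). For compact $p$-adic analytic groups, your route via a uniform open subgroup $U$ with its product decomposition $U=\overline{\langle a_1\rangle}\cdots\overline{\langle a_d\rangle}$ into procyclic subgroups, followed by the transversal trick, is a valid and more explicit alternative to the paper's argument, which only uses that such groups are topologically finitely generated and then invokes Remark~\ref{rk compact fg} (a compact group generated by finitely many abelian subgroups has finite abelian width, by the Baire-category argument of Theorem~\ref{thm Lie}).

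The gap is in the polynomial subgroup growth case. The structure theorem you attribute to Lubotzky--Mann--Segal is the one for finitely generated residually finite \emph{abstract} groups (PSG iff virtually soluble of finite rank); the profinite analogue is genuinely different. A profinite group of polynomial subgroup growth need not have an open normal pro-$\pi$ subgroup of finite rank: the correct structure theorem (Lubotzky--Segal, \emph{Subgroup Growth}, Ch.~10) allows a quasi-semisimple layer, namely a possibly infinite Cartesian product of finite simple groups of Lie type of bounded Lie rank, sitting between a prosoluble normal subgroup of finite rank and a finite top quotient. Such a group can have infinite rank and is then not virtually a product of compact $p$-adic analytic groups, so your reduction to the preceding case breaks down. (The semisimple layer could still be handled---bounded Lie rank gives a uniform bound on the number of abelian factors needed for each simple factor via the Bruhat decomposition, and these decompositions can be multiplied coordinatewise---but that is an additional argument you have not supplied.) The paper sidesteps all of this by citing the finite abelian width of profinite PSG groups directly from the literature, which is also what your closing remark proposes; as actually written, however, your sketch for this clause rests on a false structural claim.
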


The proofs of Corollaries~\ref{corintro Lie} and \ref{corintro compact} will be given at the end of Section~\ref{section proof mainthm}.

Note that the ``finite abelian width'' hypothesis is not necessary in case the building is a tree.

\begin{thmintro}\label{thm tree}
Let $G$ be an almost connected locally compact group. Suppose that $G$ acts measurably by type-preserving simplicial isometries on a tree. Then $G$ has a global fixed point. 
\end{thmintro}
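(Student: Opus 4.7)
My plan is to combine a structural reduction with the corollaries already available from Theorem~\ref{mainthm}, exploiting that trees, as rank-$1$ buildings, admit a particularly clean treatment of compact groups.

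\emph{Step 1 (compact groups on trees).} I would first establish the auxiliary lemma that any measurable action of a compact group $K$ on a tree has a fixed vertex. The action map $\rho\co K\to\Aut(T)$ pulls back basic open neighbourhoods of the identity --- pointwise stabilisers of finite vertex sets --- to Haar-measurable subsets of $K$, and is therefore Borel measurable. A classical theorem of Weil, asserting that Haar-measurable homomorphisms from a locally compact group into a Polish group are automatically continuous, then implies that $\rho$ is continuous. The orbit of a vertex $v$ is thus the continuous image of the compact set $K$ in the discrete vertex set of $T$, so it is finite; its convex hull is a finite subtree admitting a centre fixed by $K$, which by type-preservation is a vertex.

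\emph{Step 2 (reduction to $G^0$).} Suppose $G^0$ fixes a vertex. Then the fixed subtree $T^{G^0}$ is non-empty, and it is $G$-invariant because $G^0 \trianglelefteq G$. The compact quotient $G/G^0$ inherits a measurable action on $T^{G^0}$, so by Step~1 it fixes a vertex there, yielding a global fixed point for $G$.

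\emph{Step 3 (connected case).} To show $G^0$ has a fixed vertex, apply the Gleason--Yamabe theorem to produce a compact normal subgroup $K \trianglelefteq G^0$ with $L := G^0/K$ a connected Lie group. Step~1 yields a non-empty $K$-fixed subtree $T^K$, which is $G^0$-invariant since $K \trianglelefteq G^0$. The induced measurable action of $L$ on $T^K$ falls within the scope of Corollary~\ref{corintro Lie}: $L$ stabilises a spherical residue in $T^K$, and since such a residue is either a vertex or an edge and the action is type-preserving, $L$ fixes a vertex. Step~2 then completes the proof.

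\emph{Main obstacle.} The delicate point is the application of Weil's theorem in Step~1: $\Aut(T)$ with the pointwise-convergence topology is Polish only when the vertex set is countable. For trees that are not a priori second countable, I would remedy this either by restricting attention to a countable $G$-invariant subtree (for instance the convex hull of a single orbit under a $\sigma$-compact dense subset) or by pre-quotienting $G$ through a Kakutani--Kodaira compact normal subgroup to obtain a second-countable quotient before invoking measurable-to-continuous regularity.
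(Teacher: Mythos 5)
Your Steps~2 and~3 follow essentially the same skeleton as the paper's argument (Gleason--Yamabe, then the connected Lie quotient via Corollary~\ref{corintro Lie}, then the compact group of components acting on the fixed subtree), but Step~1 --- the treatment of the compact pieces --- has a genuine gap, and it sits exactly where the paper has to work hardest. For an uncountable tree $T$ the group $\Aut(T)$ with the permutation topology is not Polish; worse, the action map $\rho\co K\to\Aut(T)$ need not even be Borel measurable: preimages of \emph{basic} open sets are indeed Haar measurable (finite intersections of cosets of vertex stabilisers), but a general open set is an uncountable union of basic ones, and Haar measurable sets are not stable under uncountable unions. The Weil--Kleppner automatic continuity theorem requires a separable target (or separable image), so it does not apply. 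Observe moreover that the situation your argument could handle is precisely the one that is already trivial: if some orbit $Kv$ is countable, then $K$ is covered by countably many cosets of the measurable subgroup $K_v$, so $K_v$ has positive measure by $\sigma$-additivity, hence finite index by Lemma~\ref{lemma positive measure}, and the finite orbit has a centre fixed by $K$ --- no continuity needed. The hard case is when every vertex stabiliser has measure zero, i.e.\ every orbit is uncountable, and there your machinery is silent. Neither proposed remedy repairs this: a $\sigma$-compact dense subset of $G$ is not countable, and an almost connected locally compact group need not be separable (e.g.\ $\{0,1\}^I$ with $I$ uncountable), so the invariant subtree you build need not be countable; and the Kakutani--Kodaira quotient does not act on $T$ at all unless you first show that the small compact normal subgroup fixes a subtree, which is the same problem one level down.

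For comparison, the paper gets around this as follows. It quotes \cite[Thm 2.5]{CM11} for the fact that \emph{any} action (no continuity assumed) of a compact group on the Davis realization of a building is locally elliptic; Tits' theorem \cite[6.5 Exercise 2]{Serre} then yields a fixed point or a fixed end. In the second case the countability you are missing is recovered from the geometry of a single apartment rather than from the whole tree: the fixator of the end is the union of the pointwise fixators of the subrays of one ray towards that end, and a ray (more generally, a combinatorial sector in an apartment) carries only countably many spherical residues; hence $G$ is covered by countably many measurable subgroups, one of which has positive measure, and Lemma~\ref{lemma positive measure} together with the Bruhat--Tits lemma concludes --- this is Corollary~\ref{cor fixed point}. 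If you replace your Step~1 by this argument, your outline goes through. One further small repair: in Step~3 the fixed subtree $T^K$ may have leaves and therefore need not be a building, so Corollary~\ref{corintro Lie} does not literally apply to it; this is exactly what property $\FBP$ (Remark~\ref{remark FBP} and Lemma~\ref{lemma FBP}) is designed to handle.
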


In fact, we conjecture that this hypothesis is also unnecessary in general.

\begin{conjintro}\label{conj measurable}
Every almost connected locally compact group has property $\FB$.
\end{conjintro}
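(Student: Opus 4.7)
The plan is to reduce Conjecture~\ref{conj measurable} to property $\FB$ for compact groups via the Gleason--Yamabe theorem, and then attack the compact case with measure-theoretic tools that bypass the finite abelian width hypothesis. By Gleason--Yamabe, any almost connected locally compact group $G$ contains a compact normal subgroup $K$ such that $G/K$ is an almost connected Lie group, and so has property $\FB$ by Corollary~\ref{corintro Lie}. If one can show that $K$ stabilises some spherical residue $R_0$ of $\Delta$, then $\mathrm{Fix}_\Delta(K)$ inherits the structure of a finite rank sub-building on which $G/K$ acts by type-preserving simplicial isometries; measurability of the induced stabilisers in $G/K$ follows from measurability of the stabilisers in $G$ together with closedness of $K$. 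Applying property $\FB$ to $G/K$ then produces a $G$-stabilised spherical residue of $\Delta$.

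The problem is therefore reduced to property $\FB$ for arbitrary compact groups $K$. Working inside the Davis realisation $|\Delta|$, a complete $\CAT$ space in which $K$-stabilised spherical residues correspond (via the circumcentre of a bounded orbit) to $K$-fixed points, it suffices to produce a bounded $K$-orbit. For continuous actions this is automatic since orbits of compact groups are compact, but under the measurability hypothesis alone this can fail, and one must argue differently. A promising angle is a Steinhaus-type dichotomy: any Haar-measurable subgroup of a compact group is either open, hence of finite index, or has Haar measure zero. If one could cover $K$ by countably many stabilisers of spherical residues of $\Delta$, then at least one such stabiliser would be open, and the corresponding $K$-orbit would be finite.

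The main obstacle is precisely the production of such a covering, which essentially amounts to showing that every element of $K$ stabilises at least one spherical residue of $\Delta$. In the setting of Theorem~\ref{mainthm}, the finite abelian width hypothesis circumvents this difficulty by reducing to compact abelian subgroups, for which one can invoke the structure theory of monothetic compact groups. Without this reduction, I would attempt to exploit the projective-limit structure $K = \varprojlim K/N_\alpha$ of $K$ as an inverse limit of compact Lie groups: for each $\alpha$, the subgroup $N_\alpha$ is itself compact, and if one could inductively establish that $N_\alpha$ stabilises some spherical residue, then the quotient $K/N_\alpha$ would act on the corresponding fixed sub-building and could be handled by Corollary~\ref{corintro compact}. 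Making this induction precise, in particular ensuring non-emptiness of $\mathrm{Fix}_\Delta(N_\alpha)$ at each stage without already assuming the conclusion, is where a genuinely new input would be required; an alternative route would be an ergodic-theoretic averaging argument producing a $K$-invariant probability measure on the building whose barycentre is the sought fixed point.
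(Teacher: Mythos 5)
The statement you are proving is Conjecture~\ref{conj measurable}, which the paper leaves \emph{open}: it is established there only under the extra hypothesis of finite abelian width (Theorem~\ref{mainthm}), and is otherwise shown to follow from Conjecture~\ref{conj tree}. Your opening reduction is essentially the paper's own: combining Lemma~\ref{lemma Hilbert5} (Gleason--Yamabe) with Lemma~\ref{lemma extension} and the fact that almost connected Lie groups have property $\FBP$ (Lemma~\ref{lemma FBP}), the paper reduces the conjecture to showing that compact groups have property $\FBP$. One correction to your version of this step: the fixed-point set of $K$ in the Davis realization is the closed convex subcomplex $\Min(K)$, which is in general \emph{not} a sub-building; this is precisely why the paper introduces the strengthened property $\FBP$ (fixed points on $\Min(Q)$-subcomplexes) for the quotient, rather than arguing with $\FB$ and a ``fixed sub-building'' as you do.

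The genuine gap is in the compact case, and you slightly misidentify where it lies. Local ellipticity --- every element of a compact group acting on the Davis realization fixes a point --- is already known (\cite[Thm~2.5]{CM11}) and is Claim~1 of the proof of Theorem~\ref{thm K}; producing the countable covering of $K$ by stabilisers is \emph{not} equivalent to it. In the paper that covering is extracted only after one has a fixed point $\zeta$ in the combinatorial bordification, via the combinatorial sector argument (Claims~3 and~4 there), and the existence of $\zeta$ rests on Lemma~\ref{lemma CLytchack} applied to the filtering family $\{X^F\}$ of fixed-point sets of finite subsets $F$. The non-emptiness of the $X^F$ (Claim~2) uses commutativity in an essential way: $g$ stabilises $X^F$ \emph{because the group is abelian} and hence fixes the projection onto $X^F$ of a point of $\Min(g)$. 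For a non-abelian compact group, two elliptic isometries need not have a common fixed point, so the family $\{X^F\}$ may consist of empty sets and the whole mechanism collapses; this is exactly the content of Conjecture~\ref{conj tree} for compact groups, and neither of your two proposed substitutes addresses it. In the inverse limit $K=\varprojlim K/N_\alpha$ the kernels $N_\alpha$ are co-Lie, hence as hard to handle as $K$ itself, as you concede; and the averaging/barycentre route founders because a merely measurable (non-continuous) action gives no control whatsoever on orbits --- they need not be bounded, nor even measurable subsets of $X$ --- so there is no invariant probability measure with bounded support whose barycentre one could take. Boundedness of some orbit is the \emph{conclusion} one is after; the Steinhaus positive-measure argument (Lemma~\ref{lemma positive measure}) delivers it only once the countable covering by stabilisers is already in hand.
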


More generally, we make the following conjecture, which would imply Conjecture~\ref{conj measurable} (see section~\ref{section rk cor}). Recall that an action of a group $G$ on a metric space $X$ is {\bf locally elliptic} if each $g\in G$ has a fixed point. 

\begin{conjintro}\label{conj tree}
Let $G$ be a group acting by type-preserving simplicial isometries on a building $\Delta$. If the $G$-action on the Davis realization $X$ of $\Delta$ is locally elliptic, then $G$ has a global fixed point in $X\cup\partial X$, where $\partial X$ denotes the visual boundary of $X$.
\end{conjintro}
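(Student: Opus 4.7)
The plan is to proceed by case analysis on whether the $G$-orbits in $X$ are bounded. In the bounded case, the Bruhat--Tits fixed point theorem applies: since the Davis realization $X$ is a complete $\CAT$ space, the circumcenter of a bounded $G$-orbit is $G$-fixed. So the real task is the unbounded case: under the hypothesis that each $g\in G$ is elliptic yet some orbit is unbounded, one must produce a fixed point in $\partial X$.

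First I would try to exploit the combinatorial structure of the building. Fix a chamber $C_0$ and, for $g\in G$, let $\ell(g)$ be the gallery distance from $C_0$ to $gC_0$. Since orbits on $X$ are unbounded, $\sup_{g\in G}\ell(g)=\infty$; choose $g_n\in G$ with $\ell(g_n)\to\infty$. Minimal galleries from $C_0$ to $g_nC_0$ determine a sequence of combinatorial directions. I would look for a combinatorial compactness statement --- in the spirit of the combinatorial bordification of Caprace--L\'ecureux --- that extracts a limit spherical residue at infinity from such a sequence, and then relate $G$-fixed spherical residues at infinity to $G$-fixed points of $\partial X$.

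An alternative, more $\CAT$-flavoured route is the following. Using local ellipticity, write $G$ as a directed union of subgroups $G_\alpha$ each having a fixed point in $X$. For each $\alpha$ the fixed set $X^{G_\alpha}$ is a nonempty closed convex subset of $X$, and the family $\{X^{G_\alpha}\}_\alpha$ is filtered by reverse inclusion. If its intersection is nonempty, any common fixed point is $G$-fixed. Otherwise one argues that the nearest-point projection from a base point onto $X^{G_\alpha}$ escapes to infinity along a Cauchy direction, producing $\xi\in\partial X$ invariant under every $G_\alpha$ and hence under $G$. Running this requires controlling how the $X^{G_\alpha}$ shrink in $X$: one must show that their visual shadows in $\partial X$ form a filtering family of closed subsets with nonempty intersection.

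The main obstacle is the failure of local compactness: spherical residues of a finite rank building may have infinite thickness, so $X\cup\partial X$ is not compact and naive extraction of limits fails. Overcoming this seems to require genuinely combinatorial input --- either transferring the problem to a compact combinatorial bordification and pulling the fixed point back to $\partial X$, or inducting on the rank by showing that a proper $G$-invariant closed convex subspace of $X$ is again (isometric or closely related to) the Davis realization of a building of strictly smaller rank, and invoking the inductive hypothesis. I expect identifying such a convex subspace with a lower-rank Davis realization to be the most delicate point of the whole argument.
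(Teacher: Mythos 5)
The statement you were asked to prove is Conjecture~\ref{conj tree}, which is precisely that: an \emph{open conjecture}. The paper offers no proof of it; it only records that the case of trees is known (by Tits, \cite[6.5 Exercise 2]{Serre}), and it proves the \emph{implication} that Conjecture~\ref{conj tree} would yield Conjecture~\ref{conj measurable}. Your text is accordingly a survey of strategies rather than a proof, and you yourself flag the sticking points; but it is worth naming exactly where each of your two routes breaks down, because the breakdown is the conjecture.

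The decisive gap is in your second, $\CAT$-flavoured route, at the sentence ``Using local ellipticity, write $G$ as a directed union of subgroups $G_\alpha$ each having a fixed point in $X$.'' Local ellipticity only gives a fixed point for each \emph{element}, i.e.\ that each $X^{\{g\}}$ is nonempty. To apply Lemma~\ref{lemma CLytchack} you need a \emph{filtering} family of nonempty closed convex sets, hence you need finite subsets $F\subset G$ to have $X^F\neq\emptyset$; nothing in the hypotheses gives this. In the paper's proof of Theorem~\ref{thm K} this step (Claim~2 there) is carried out only for \emph{abelian} $G$, using commutativity in an essential way: $g$ stabilises $X^F$ because $g$ commutes with $F$, so $g$ fixes the projection of a point of $\Min(g)$ onto $X^F$. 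For general locally elliptic $G$ the passage from single elements to finite subsets is exactly the content of the conjecture, not a consequence of the hypotheses. Your first, combinatorial route fails for the reason you partly identify: the combinatorial bordification $\CCC_{\sph}(\Delta)$ is a genuine compactification only for locally finite buildings, while here residues may have infinite thickness, so no limit of the residues $\proj_{\sigma}(g_nC_0)$ can be extracted in general; and even granting a limit $\xi$, one would still need the whole group (not just the sequence $g_n$) to fix it. So neither route closes, and you should present this statement as a conjecture, with the bounded-orbit case and the abelian case as the supporting evidence you can actually prove.
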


Note that this conjecture is equivalent to asserting that, under the same hypotheses, $G$ fixes a point in the so-called {\bf combinatorial bordification} of $\Delta$, as we will see in Section~\ref{section rk cor}.

Before we state the corollaries to Theorem~\ref{mainthm} concerning Kac--Moody theory, we make some remarks about property $\FB$ itself.

\begin{remarkintro}\label{rkintro Serre}
The restriction to measurable actions in the statement of property $\FB$ is essential, since even the most basic examples of locally compact groups admit non-measurable actions on trees without fixed point. Indeed, if a group $G$ is the countable union of a strictly increasing sequence of subgroups, then by \cite[6.1 Thm.15]{Serre} it possesses an action without fixed point on a tree (and the stabilisers of points for this action are precisely the conjugates of the subgroups in the given sequence). Note that we can assume this tree to be a building, since we can glue rays at each endpoint of the tree without affecting the $G$-action.

We now construct such a sequence of proper subgroups for $G$ in case $G=(\RR,+)$. Let $\BBB$ be a basis of $\RR$ over $\QQ$ and let $\{x_i \ | \ i\in\NN\}\subset\BBB$ be a countable family of (pairwise distinct) basis elements. For each $n\in\NN$, let $V_n$ denote the $\QQ$-sub-vector space of $\RR$ with basis $\BBB\setminus\{x_i \ | \ i\geq n\}$. Then the additive groups of the $V_n$ yield the desired chain. Note that the existence of such a chain is equivalent to the existence of a non-Lebesgue measurable subset of $\RR$ (see Remark~\ref{rk positive measure} below), hence to the axiom of choice.

Remark also that for a nonzero $x$ in $V_{0}$, we may project this chain into $\RR/x\ZZ$, yielding an example of a compact group acting without fixed point on a tree. 
\end{remarkintro}

\begin{remarkintro}
In the context of almost connected locally compact groups, the notion of Haar measurability generalises both Borel and universal measurability. Indeed, since such a group $G$ is compactly generated, it is in particular $\sigma$-compact. Hence its Haar measure $\mu$ is $\sigma$-finite. Thus one can construct from $\mu$ a complete probability measure on $G$ whose measurable sets coincide with the Haar measurable sets. For this reason, we will only speak in this paper about Haar measurability.
\end{remarkintro}

\begin{remarkintro}
We finally remark that Conjecture~\ref{conj measurable} is not true in general for locally compact groups that are not almost connected. Indeed, note first that any group $G$ equipped with the discrete topology is locally compact. Moreover, any action of such a $G$ on a building is automatically measurable. In particular if $G$ does not have property $\FA$, then it does not have property $\FB$ either. Therefore, as far as property $\FB$ is concerned, one cannot hope to get any general statement about locally compact groups without some non-discreteness assumption.

Note also that if a locally compact group $G$ is equipped with a non-spherical BN-pair $(B,N)$ such that $B$ is open, then it acts measurably and strongly transitively on the associated non-spherical building, and hence does not satisfy property $\FB$. In particular, complete non-compact Kac--Moody groups over finite fields give another class of (totally disconnected non-compact) locally compact groups not satisfying property $\FB$. Indeed, we recall that such Kac--Moody groups are equipped with the compact open topology with respect to their action on the positive associated building. Hence stabilisers of chambers are open since the action is simplicial. For basics on Kac--Moody groups and their completions, we refer to \cite{CaRe} and \cite{theseBR}.
\end{remarkintro}

We now turn to the corollaries of Theorem~\ref{mainthm} concerning Kac--Moody groups over fields, as defined by Tits (\cite{Tits87}). The key result, which readily follows from Corollary~\ref{corintro Lie}, gives a partial answer to a problem stated in \cite[xi]{thesePE}.

\begin{thmintro}\label{thmintro KM}
Any measurable homomorphism of an almost connected Lie group into a Kac--Moody group has bounded image.
\end{thmintro}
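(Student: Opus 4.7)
The plan is to apply Corollary~\ref{corintro Lie} to the natural action of the source group on one of the twin buildings of the target. Concretely, let $L$ be an almost connected Lie group and let $\varphi\co L\to G$ be a measurable homomorphism, where $G$ is a Tits Kac--Moody group. Since $G$ carries a refined Tits system (a twin BN--pair), it acts by type-preserving simplicial isometries on its positive associated building $\Delta_+$, which is of finite rank. Composing with $\varphi$ yields a type-preserving simplicial isometric action of $L$ on $\Delta_+$.

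The first step is to verify that this action is measurable in the sense required by property $\FB$. For each spherical residue $R$ of $\Delta_+$, the stabiliser of $R$ in $L$ is precisely $\varphi\inv(G_R)$, where $G_R$ denotes the stabiliser of $R$ in $G$. By the definition of measurability of $\varphi$ adopted in \cite{thesePE}, this preimage is Haar measurable in $L$; hence the $L$-action on $\Delta_+$ is measurable in the sense of property $\FB$.

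Next, Corollary~\ref{corintro Lie} guarantees that $L$ has property $\FB$, so that $L$ must stabilise some spherical residue $R_0\subset\Delta_+$. In particular, $\varphi(L)\subseteq G_{R_0}$. To conclude, one recalls that in a Kac--Moody group the stabiliser of a spherical residue of $\Delta_+$ is conjugate to a standard parabolic subgroup of spherical (i.e., finite) type; such subgroups are the archetypal bounded subgroups of $G$, since they stabilise a point of the Davis realization. Therefore $\varphi(L)$ is bounded, as desired.

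The main obstacle is definitional rather than technical: one must make sure that the notion of ``measurable homomorphism'' inherited from \cite{thesePE} really does translate into Haar measurability of all the preimages $\varphi\inv(G_R)$, and that ``bounded image'' for Kac--Moody groups matches containment in a stabiliser of a spherical residue. Once these two points are made precise, the theorem follows immediately by applying property $\FB$ through $\varphi$.
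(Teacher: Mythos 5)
Your argument correctly sets up the action of $L$ on the positive building and correctly checks that this action is measurable in the sense of property $\FB$ (the stabilisers of spherical residues of $\Delta_+$ are exactly the finite type parabolic subgroups of positive sign, so their preimages under $\varphi$ are Haar measurable by the very definition of a measurable homomorphism into a Kac--Moody group). Applying Corollary~\ref{corintro Lie} then indeed gives $\varphi(L)\subseteq G_{R_0}$ for some spherical residue $R_0$ of $\Delta_+$. This is the same mechanism as the paper's proof.

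However, there is a genuine gap at the last step: containment in a single finite type parabolic subgroup of one sign is \emph{not} what ``bounded'' means here, and does not imply it. A subgroup of a Kac--Moody group is bounded if it is contained in the intersection of \emph{two} finite type parabolic subgroups of \emph{opposite} signs. A finite type parabolic $P_+=L\ltimes U_+$ of positive sign is itself unbounded whenever the group is non-spherical: its unipotent radical $U_+$ contains root groups for infinitely many positive real roots and has unbounded orbits on the negative building, so it fixes no spherical residue of negative sign. Thus your sentence asserting that stabilisers of spherical residues of $\Delta_+$ ``are the archetypal bounded subgroups'' is false, and the proof as written only establishes half of the conclusion. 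The fix is immediate and is exactly what the paper does: $G$ also acts on the negative building $\Delta_-$, the induced $L$-action on $\Delta_-$ is measurable for the same reason as on $\Delta_+$, so Corollary~\ref{corintro Lie} produces a second spherical residue $R_0^-\subset\Delta_-$ stabilised by $\varphi(L)$. Then $\varphi(L)\subseteq G_{R_0}\cap G_{R_0^-}$ is an intersection of finite type parabolics of opposite signs, hence bounded.
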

Here, we equip a Kac--Moody group with the $\sigma$-algebra generated by the finite type parabolic subgroups of each sign; measurability in Lie groups (and more generally, in locally compact groups) will always be understood as Haar measurability.
We recall that a subgroup of a Kac--Moody group is {\bf bounded} if it is contained in the intersection of two finite type parabolic subgroups of
opposite signs.

\medskip
Let now $k$ be either $\RR$ or $\CC$. In the following statements, we consider \emph{adjoint} Kac-Moody groups $G=\GGG(k)$ over $k$, that is, images under the adjoint representation of Kac--Moody groups over $k$. In addition, we assume that they are generated by their root subgroups. 

Recall that such a $G$ is naturally endowed with a topology, the so-called Kac--Peterson topology, which turns it into a connected Hausdorff topological group (\cite{Kacpetersontopo}, \cite[Prop.5.15]{HartKohlmars}). A {\bf one-parameter subgroup} of $G$ is then a continuous homomorphism from $\RR$ to $G$. The set of all one-parameter subgroups of $G$ is denoted by $\Hom_c(\RR,G)$. As is well known, in case $G$ is a Lie group, this set can be given a Lie algebra structure so that it identifies with the Lie algebra of $G$ (see e.g. \cite[Prop.2.10]{HofMorliegr}). In fact, by the solution to Hilbert's fifth problem (see \cite{Montgomzippinhilbert5}), this construction extends to connected locally compact groups as well. The following result opens the way for analogs to these classical results within Kac--Moody theory. 

\begin{corintro}\label{cor KMR}
Every one-parameter subgroup $\alpha$ of a real or complex Kac--Moody group $G$ is of the form $\alpha(t)=\exp\ad(tx)$ for some $\ad$-locally finite $x\in\Lie(G)$.
\end{corintro}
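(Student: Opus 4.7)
The plan is to combine Theorem~\ref{thmintro KM} with the classical theory of one-parameter subgroups inside a finite-dimensional Levi subgroup of $G$. Since $\RR$ is a connected (hence almost connected) Lie group and $\alpha\co\RR\to G$ is continuous, $\alpha$ is in particular measurable with respect to the $\sigma$-algebra on $G$ generated by the finite type parabolic subgroups. Theorem~\ref{thmintro KM} therefore applies and yields that $\alpha(\RR)$ is bounded, that is, contained in $L:=P_+\cap P_-$ for some finite type parabolic subgroups $P_+,P_-$ of opposite signs.

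Such an intersection $L$ is a standard Levi subgroup of $G$; it is a finite-dimensional real or complex reductive Lie group, whose Lie algebra $\Lie(L)\subseteq\Lie(G)$ is spanned by the Cartan subalgebra together with the root spaces indexed by some finite type subsystem. In particular, every element $X\in\Lie(L)$ is $\ad$-locally finite in $\Lie(G)$: its adjoint action preserves a finite-dimensional subspace of each root space of $\Lie(G)$, thanks to the finiteness of the root system of $L$ and the fact that the toral part of $X$ acts diagonalisably.

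I would next verify that the Kac--Peterson topology on $G$ restricts to the standard Lie group topology on $L$, so that the corestriction $\alpha\co\RR\to L$ is a continuous homomorphism of real or complex Lie groups. The classical one-parameter subgroup theorem for finite-dimensional Lie groups then provides a unique $X\in\Lie(L)$ with $\alpha(t)=\exp_L(tX)$ for all $t\in\RR$. Since $G$ is obtained from a Kac--Moody group via the adjoint representation and is generated by its root subgroups, the exponential $\exp_L$ of the Levi coincides with the formal Kac--Moody exponentiation $\exp\ad$ applied to $\ad$-locally finite elements of $\Lie(G)$. Hence $\alpha(t)=\exp\ad(tX)$, as desired.

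The main obstacle I anticipate is precisely this last compatibility: identifying the Kac--Peterson topology on $L$ with its Lie group topology, and identifying $\exp_L$ with $\exp\ad$ on $\Lie(L)$. Both should follow from the construction of the Kac--Peterson topology via the torus and the (one-dimensional) root subgroups, each of which is itself a genuine Lie subgroup of $G$ with compatible exponential map; but a careful bookkeeping is required to patch these local pictures into the Levi $L$ as a whole. Once this is in place, Corollary~\ref{cor KMR} reduces to the classical fact that every continuous one-parameter subgroup of a finite-dimensional Lie group is the exponential of a unique Lie algebra element.
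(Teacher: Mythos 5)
Your argument is correct and follows essentially the same route as the paper: continuity gives measurability for the $\sigma$-algebra generated by the finite type parabolics (Lemma~\ref{lemma parabolics closed}), Theorem~\ref{thmintro KM} gives boundedness, and the bounded subgroup carries an almost connected Lie group structure with $\ad$-locally finite Lie algebra (Lemma~\ref{lemma cor KM CM}), after which the classical one-parameter subgroup theorem (Lemma~\ref{lemma cor KM Lie}) finishes. The only small inaccuracy is that $P_+\cap P_-$ need not be a reductive Levi subgroup --- it is a Levi factor semidirect a unipotent radical by \cite[Prop.3.6]{CM06} --- but since the unipotent radical is a bounded product of finitely many root groups, the intersection is still a finite-dimensional almost connected Lie group and your argument goes through unchanged.
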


We explain and prove this statement in section~\ref{section cor KM}.

Since maximal bounded subgroups can be given a Lie group structure (see Lemma~\ref{lemma cor KM CM} below), the automatic continuity of measurable homomorphisms between Lie groups (see e.g. \cite[Theorem 1]{automcont}) extends as follows: 

\begin{corintro}\label{corintro KM2}
Every measurable homomorphism between real or complex Kac--Moody groups is continuous.
\end{corintro}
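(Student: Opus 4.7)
The plan is to reduce the continuity of a measurable homomorphism $\varphi\co G\to H$ between real or complex Kac--Moody groups to the classical automatic continuity of measurable homomorphisms between Lie groups \cite[Theorem 1]{automcont}, by exploiting the ``Lie skeleton'' of $G$ and $H$ afforded by their bounded subgroups.

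The first and main step is to establish continuity of the restriction $\varphi|_K$, where $K$ is a maximal bounded subgroup of $G$. By Lemma~\ref{lemma cor KM CM}, $K$ carries the structure of an almost connected Lie group. Since the finite type parabolic subgroups of $G$ intersect $K$ in Borel (hence Haar-measurable) subsets of $K$, the measurability of $\varphi$ with respect to the $\sigma$-algebra generated by the finite type parabolic subgroups of each sign descends to Haar-measurability of $\varphi|_K$. Theorem~\ref{thmintro KM} then applies to this Lie-group-valued-by-Kac-Moody-target restriction and yields that $\varphi(K)$ is bounded in $H$, hence contained in some maximal bounded subgroup $L\leq H$ which, by Lemma~\ref{lemma cor KM CM} again, is itself an almost connected Lie group. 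Thus $\varphi|_K\co K\to L$ is a measurable homomorphism between Lie groups, and \cite[Theorem 1]{automcont} yields that $\varphi|_K$ is continuous.

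It remains to upgrade this to continuity of $\varphi$ on all of $G$. Here I would apply the preceding argument not to a single $K$ but to a collection of bounded subgroups sufficient to control the Kac--Peterson topology — for instance, to a pair $K^+$, $K^-$ of maximal bounded subgroups of opposite signs, together with sufficiently many products of root subgroups in a neighborhood of the identity. Combining continuity on each such subgroup with the fact that the Kac--Peterson topology on $G$ is generated by the topologies on these bounded Lie subgroups (see \cite{Kacpetersontopo, HartKohlmars}) should yield continuity of $\varphi$ at the identity, and hence everywhere by the homomorphism property.

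The main obstacle is this last step: the precise way in which the Kac--Peterson topology is assembled from the bounded Lie subgroups must be used to transfer continuity from $K$ to all of $G$. Once this is clarified — likely via an Iwasawa-type decomposition or the characterisation of the Kac--Peterson topology as the finest group topology for which each bounded subgroup embeds with its Lie group topology — the reduction to the Lie group category completes the argument.
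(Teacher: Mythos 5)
The first half of your argument is sound and matches the paper's strategy: restrict $\varphi$ to a bounded Lie subgroup, observe that the restriction is Haar measurable (the parabolics are closed by Lemma~\ref{lemma parabolics closed}, so this follows from the definition of a measurable homomorphism between Kac--Moody groups), invoke Theorem~\ref{thmintro KM} to see that the image is bounded and hence lands in an almost connected Lie group by Lemma~\ref{lemma cor KM CM}, and conclude continuity of the restriction from \cite[Theorem 1]{automcont}. The only difference is that the paper applies this to the rank-one subgroups $X_i\cong\SL(2,k)$ rather than to maximal bounded subgroups, and that choice is not cosmetic --- it is exactly what makes the second half work.

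The gap is in your final step, and the specific mechanisms you propose would fail. There is no Iwasawa-type decomposition of $G$ into finitely many bounded subgroups: if $G$ were a bounded product of finitely many bounded subgroups, then by Lemma~\ref{lemma reduction}(1) it would fix a point in the Davis realization of its (non-spherical) building, contradicting strong transitivity. Likewise, $G$ is not locally compact and no identity neighbourhood is controlled by a pair $K^+$, $K^-$ of maximal bounded subgroups, so continuity at the identity cannot be checked on such a pair. The correct tool is the universal property of the Kac--Peterson topology as a \emph{final} topology: by definition it is the finest topology making all multiplication maps $X_{i_1}\times\dots\times X_{i_n}\to G$ continuous, so a homomorphism $\alpha\co G\to G_2$ is continuous if and only if each composite $X_{i_1}\times\dots\times X_{i_n}\to G_2$, $(x_1,\dots,x_n)\mapsto\alpha(x_1)\cdots\alpha(x_n)$, is continuous. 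This composite factors through $\alpha|_{X_{i_1}}\times\dots\times\alpha|_{X_{i_n}}$ followed by multiplication in $G_2$, and each factor is continuous by the first half of the argument applied to $X_{i_j}\cong\SL(2,k)$. This is why the paper runs the bounded-image argument on the $X_i$ specifically: they are the subgroups out of which the topology is built, whereas maximal bounded subgroups are not.
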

Here by a {\bf measurable} homomorphism between two real or complex Kac--Moody groups, we mean a homomorphism $\phi\co G_1\to G_2$ between these groups such that the preimage of an open set of $G_2$ by the restriction of $\phi$ to any Lie subgroup of $G_1$ (that is, any closed subgroup of $G_1$ with a Lie group structure) is Haar measurable. Note that Borel homomorphisms are examples of measurable homomorphisms in this sense.

Finally, as a last consequence of Theorem~\ref{thmintro KM}, we get the following classification of measurable isomorphisms between real or complex Kac--Moody groups.

\begin{corintro}\label{corintro KM3}
Let $\alpha$ be a measurable isomorphism between real or complex Kac--Moody groups. Then $\alpha$ is continuous and standard, that is, it induces an isomorphism of the corresponding twin root data. 
\end{corintro}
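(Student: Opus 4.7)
The plan is to combine Corollaries~\ref{corintro KM2} and~\ref{cor KMR} with a structural argument to recover an isomorphism of twin root data from the measurable isomorphism $\alpha$.

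First, continuity. Corollary~\ref{corintro KM2} applied to $\alpha$ directly yields that $\alpha$ is continuous. To deduce continuity of $\alpha^{-1}$, I would restrict $\alpha$ to a maximal bounded subgroup $K_1\subset G_1$: by Lemma~\ref{lemma cor KM CM} this is a Lie group, and Corollary~\ref{corintro KM2} gives that $\alpha|_{K_1}$ is continuous. Its image is a bounded subgroup of $G_2$, hence lies in some maximal bounded, i.e., Lie, subgroup $K_2$, and the resulting continuous bijective homomorphism $\alpha|_{K_1}\co K_1\to K_2$ of Lie groups has continuous inverse by the classical automatic continuity theorem. Since the Kac--Peterson topology on each $G_i$ is determined, via the Bruhat decomposition, by its restriction to translates of such maximal bounded subgroups, continuity of $\alpha^{-1}$ then follows globally.

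For standardness, Corollary~\ref{cor KMR} is the key tool. It gives that $\alpha$ induces a bijection between the sets of one-parameter subgroups of $G_1$ and $G_2$, which via the exponential parametrization corresponds to a bijection between the $\ad$-locally finite elements of the two Kac--Moody Lie algebras, intertwining the exponential maps. Real root subgroups admit an intrinsic characterization as the one-parameter subgroups whose elements are $\ad$-locally nilpotent and which are normalized by (a conjugate of) a maximal split torus. Choosing a compatible pair of maximal split tori $T_i\subset G_i$, one then obtains bijections between the two systems of root subgroups and between the two tori, and together these constitute an isomorphism of root data. Compatibility with the twinning follows from the fact that $\alpha$ and $\alpha^{-1}$ both preserve boundedness, and hence map each half of the twin BN-pair into a half of the twin BN-pair of the target group.

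The main obstacle lies in this second step, specifically in the intrinsic characterization of root subgroups inside $G$. Ruling out exotic one-parameter subgroups of $\ad$-locally nilpotent elements that could masquerade as root subgroups under $\alpha$ will require the structure theory of real and complex Kac--Moody groups developed by Kac--Peterson, R\'emy, and Hartnick--K\"ohl. The continuity step, by contrast, is essentially a reduction to Lie theory, modulo a careful analysis of how the Kac--Peterson topology is controlled by its restriction to maximal bounded subgroups.
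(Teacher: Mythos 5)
Your continuity step is fine, and in fact shorter than you make it: Corollary~\ref{corintro KM2} applies verbatim to $\alpha$, and the statement only asks for continuity of $\alpha$, not of $\alpha^{-1}$, so the analysis of how the Kac--Peterson topology is controlled by maximal bounded subgroups is unnecessary. (It is also not quite right as stated: the Kac--Peterson topology is defined as a final topology with respect to products of the rank one subgroups $X_{i_1}\times\dots\times X_{i_n}$, not via translates of maximal bounded subgroups, so your claimed reduction would itself need justification.)

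The standardness step, however, has a genuine gap, and it is exactly where you say the ``main obstacle'' lies. You propose to recover the twin root datum by intrinsically characterizing root subgroups among one-parameter subgroups with $\ad$-locally nilpotent generators normalized by a maximal torus, but you do not establish such a characterization, and building it amounts to redoing a substantial piece of rigidity theory. The paper sidesteps this entirely by quoting an existing rigidity criterion of Caprace--M\"uhlherr (\cite[Thm.6.3]{CM06}): an abstract isomorphism of Kac--Moody groups is standard as soon as it maps bounded subgroups to bounded subgroups. With that criterion in hand the proof is two lines: any bounded subgroup $H\leq G_1$ is contained in an almost connected Lie subgroup by Lemma~\ref{lemma cor KM CM}, the restriction of $\alpha$ to that Lie subgroup is a measurable homomorphism into $G_2$, hence has bounded image by Theorem~\ref{thmintro KM} (using Lemma~\ref{lemma parabolics closed} to relate the measurable structure to the topology), so $\alpha(H)$ is bounded. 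Note that Corollary~\ref{cor KMR} plays no role here. Unless you either locate and invoke such a boundedness-implies-standardness criterion or actually carry out the intrinsic characterization of root subgroups, your argument for standardness is incomplete.
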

Without using the assumption of measurability, this has been proved by Caprace (\cite{thesePE}). For almost split Kac--Moody groups of 2-spherical type, the corresponding result has been obtained by G. Hainke (\cite{Hainke}). Note however that our proof relying on Theorem~\ref{mainthm} is substantially shorter. 

We also refer to the end of Section~\ref{section cor KM} for detailed proofs of these corollaries.

\subsection*{Conventions} Throughout this paper, all buildings are assumed to have finite rank. Unless otherwise stated, we see buildings as simplicial complexes. 

\subsection*{Acknowledgments} I am very grateful to Pierre-Emmanuel Caprace for proposing this problem to me in the first place, as well as for various helpful comments and suggestions. I would also like to thank Tobias Hartnick and Ralf Köhl, as well as the anonymous referee for their useful comments.

\section{Property $\FB$}\label{section proof mainthm}

In this section, we establish the core of the argument for the proof of Theorem~\ref{mainthm} (see Theorem~\ref{thm K} below). We then deduce Corollary~\ref{corintro Lie} and \ref{corintro compact}.

\subsection*{Davis realization of a building}
Recall from \cite{Davis} that any building $\Delta$ admits a metric realization, here denoted by $X:=|\Delta|$, which is a complete $\CAT$ cell complex. Moreover any group of type-preserving automorphisms of $\Delta$ acts in a canonical way by cellular isometries on $X$. 
Finally, the cell supporting any point of $X$ determines a unique spherical residue of $\Delta$. In particular, an automorphism of $\Delta$ which fixes a point in $X$ must stabilise the corresponding spherical residue in $\Delta$.

\subsection*{The Bruhat--Tits fixed point theorem} 
Let $G$ be a group acting by isometries on a complete $\CAT$ space $X$. Then $G$ has a fixed point in $X$ if and only if its orbits in $X$ are bounded. Indeed, if $G$ fixes a point then it stabilises the spheres centered at that point. The converse follows from the Bruhat--Tits fixed point theorem (see for example \cite[Th.11.23]{BrownAbr}). We now state an easy application of this fact. 

We say that $G$ is a {\bf bounded product} of finitely many subgroups $U_1,\dots,U_n$, or that it is {\bf boundedly generated} by these subgroups, which we write $G=U_1\dots U_n$, if each element $g\in G$ can be written as $g=u_1\dots u_n$ for some $u_i\in U_i$, $1\leq i\leq n$. If $G$ is boundedly generated by abelian subgroups, we say that it has {\bf finite abelian width}. 

\begin{lemma}\label{lemma reduction}
$G$ fixes a point in $X$ as soon as one of the following holds:
\begin{itemize}
\item[(1)] $G$ is a bounded product of subgroups each fixing a point in $X$.
\item[(2)] There exists a finite-index subgroup of $G$ fixing a point in $X$.
\end{itemize} 
\end{lemma}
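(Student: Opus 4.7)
The plan is to invoke the Bruhat--Tits fixed point theorem in both cases, so it suffices to exhibit a bounded $G$-orbit in $X$.

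For (2), fix a point $p\in X$ fixed by the finite-index subgroup $H$ of $G$, and choose coset representatives $g_1,\dots,g_k$ of $H$ in $G$. Every $g\in G$ is of the form $g_ih$ with $h\in H$, and $hp=p$, so the orbit $G\cdot p=\{g_1p,\dots,g_kp\}$ is finite and therefore bounded. Applying Bruhat--Tits yields a $G$-fixed point in $X$.

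For (1), write $G=U_1\cdots U_n$ with $U_i$ fixing some $x_i\in X$, and fix an arbitrary base point $x\in X$. The key inequality is that, for any $u\in U_i$, the fact that $u$ is an isometry with $ux_i=x_i$ gives
\[d(ux,x)\leq d(ux,ux_i)+d(x_i,x)=2d(x,x_i).\]
For $g=u_1\cdots u_n\in G$, setting $v_0=e$ and $v_i=u_1\cdots u_i$, the triangle inequality combined with the fact that each $v_{i-1}$ is an isometry yields the telescoping bound
\[d(gx,x)\leq\sum_{i=1}^n d(v_ix,v_{i-1}x)=\sum_{i=1}^n d(u_ix,x)\leq 2\sum_{i=1}^n d(x,x_i),\]
which is independent of $g$. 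Hence $G\cdot x$ is bounded, and Bruhat--Tits again supplies a global fixed point.

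Neither case presents a genuine obstacle; both reductions are standard $\CAT$ computations. The only point to be careful about in (1) is to exploit the isometry property of the partial products $v_{i-1}$ at each step of the telescoping sum, so that the final bound depends only on the chosen fixed points $x_i$ and the base point $x$, and not on the individual factors $u_i$.
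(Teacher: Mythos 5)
Your proof is correct and follows essentially the same route as the paper: both cases come down to exhibiting a bounded $G$-orbit and invoking the Bruhat--Tits fixed point theorem, with your explicit telescoping estimate in case (1) simply spelling out the ``straightforward induction'' (each $U_i$ maps bounded sets to bounded sets) that the paper leaves to the reader.
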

\begin{proof}
Suppose $G=U_1\dots U_n$ for some subgroups $U_i\leq G$ with bounded orbits in $X$. Then each $U_i$ maps a bounded set onto a bounded set. A straightforward induction now proves the first case. To prove the second case, let $H<G$ be a finite index subgroup of $G$ with bounded orbits in $X$. Write $G=\coprod_{i=1}^{n}{g_iH}$ for some $g_i\in G$. Then each $x\in X$ is mapped by $G$ onto the finite union $\bigcup_{i=1}^{n}{g_i(Hx)}$ of bounded subsets, which is again bounded.  
\end{proof}

\subsection*{Actions on $\CAT$ spaces}
Let $G$ be a group acting by isometries on a $\CAT$ space $X$. For every $g\in G$, we let $|g|:=\inf\{d(x,g\cdot x) \ | \ x\in X\}\in [0,\infty)$ denote its {\bf translation length} and we set $\Min(g):=\{x\in X \ | \ d(x,g\cdot x)=|g|\}$. An element $g\in G$ is said to be {\bf elliptic} if it fixes some point. For a subgroup $H\leq G$, we also write $\Min(H):=\bigcap_{h\in H}{\Min(h)}$ and we say that the $H$-action on $X$ is {\bf locally elliptic} if each $h\in H$ is elliptic.

For the convenience of the reader, we record the following result from \cite[Thm.1.1]{CL10}, which we will use in the course of the proof of Theorem~\ref{mainthm}.
\begin{lemma}\label{lemma CLytchack}
Let $X$ be a complete $\CAT$ space of finite geometric dimension and $\{X_{\alpha}\}_{\alpha\in A}$ be a filtering family of closed convex non-empty subspaces. Then either the intersection $\bigcap_{\alpha\in A}{X_{\alpha}}$ is non-empty, or the intersection of the visual boundaries $\bigcap_{\alpha\in A}{\partial X_{\alpha}}$ is a non-empty subset of $\partial X$ of intrinsic radius at most $\pi/2$.
\end{lemma}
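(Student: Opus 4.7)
The plan is to set up a dichotomy based on the behaviour of projections of a fixed basepoint $x_0 \in X$ onto the $X_\alpha$. Let $p_\alpha := \proj_{X_\alpha}(x_0)$; the monotonicity of distance to closed convex subsets under inclusion shows that $d(x_0, p_\alpha)$ is non-decreasing along the filter. The argument then splits according to whether this quantity remains bounded or escapes to infinity.

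If $\sup_\alpha d(x_0, p_\alpha) < \infty$, I claim that $(p_\alpha)$ is a Cauchy net. Indeed, whenever $X_\beta \subseteq X_\alpha$, the point $p_\beta$ lies in $X_\alpha$, and since $p_\alpha$ is the foot of the perpendicular from $x_0$ to $X_\alpha$, the $\CAT$ inequality yields
\[ d(p_\alpha, p_\beta)^2 \leq d(x_0, p_\beta)^2 - d(x_0, p_\alpha)^2, \]
which tends to zero in the filter limit. Completeness of $X$ then gives a limit $x_\infty$, and closedness of the $X_\alpha$ forces $x_\infty \in \bigcap_\alpha X_\alpha$. In the complementary case $d(x_0, p_\alpha) \to \infty$, so the intersection is empty and I must exhibit a point at infinity. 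After passing to a suitable subnet (invoking an ultralimit if one wishes to avoid topological assumptions on $\partial X$), the directions at $x_0$ toward $p_\alpha$ converge to some $\xi \in \partial X$. For any fixed $\alpha_0$, the points $p_\beta$ for $\beta$ refining $\alpha_0$ lie in $X_{\alpha_0}$, so $\xi$ lies in the closure of $X_{\alpha_0}$ in the bordification $X \cup \partial X$, namely $X_{\alpha_0} \cup \partial X_{\alpha_0}$. Therefore $\xi \in \bigcap_\alpha \partial X_\alpha$.

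It remains to bound the intrinsic radius of $\eta := \bigcap_\alpha \partial X_\alpha$ inside $(\partial X, d_T)$, where $d_T$ denotes the Tits metric. This is where the finite geometric dimension hypothesis is crucial: by a theorem of Kleiner, the Tits boundary of a complete $\CAT$ space of finite geometric dimension is itself a finite-dimensional CAT(1) space. Suppose for contradiction that the intrinsic radius of $\eta$ exceeded $\pi/2$. General principles for finite-dimensional CAT(1) spaces would then yield a pair of Tits-antipodal points $\xi, \xi' \in \eta$ at distance $\pi$. Such an antipodal pair lying in every $\partial X_\alpha$ must, by the flat-strip theorem, be joined by a geodesic line inside each $X_\alpha$; intersecting these parallel lines would produce a point of $\bigcap_\alpha X_\alpha$, contradicting the assumption that this intersection is empty. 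The existence of a circumcenter realising the bound $\pi/2$ then follows from the standard circumcenter theorem for finite-dimensional CAT(1) spaces due to Balser--Lytchak.

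The main obstacle is precisely this last step: invoking the correct finite-dimensionality transfer theorem for the Tits boundary, and carrying out the antipode/flat-strip analysis rigorously, are substantial inputs that essentially constitute the content of \cite{CL10}. The earlier steps, by contrast, amount to standard projection estimates and manipulation of the cone topology on the bordification.
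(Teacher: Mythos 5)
First, a remark on the comparison itself: the paper offers no proof of this lemma --- it is quoted verbatim from \cite[Thm.1.1]{CL10} ``for the convenience of the reader'' --- so your attempt is to be judged on its own. Your first half is correct and standard: for $p_\alpha=\proj_{X_\alpha}(x_0)$ the inequality $d(p_\alpha,p_\beta)^2\leq d(x_0,p_\beta)^2-d(x_0,p_\alpha)^2$ (valid whenever $X_\beta\subseteq X_\alpha$) makes $(p_\alpha)$ a Cauchy net as soon as $\sup_\alpha d(x_0,X_\alpha)<\infty$, and completeness plus closedness give a point of the intersection. This needs neither filtering beyond directedness nor finite dimension.

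The second half has two genuine gaps, and they are exactly where the finite geometric dimension hypothesis has to do real work. (1) The claim that, after passing to a subnet, the directions toward $p_\alpha$ converge to some $\xi\in\partial X$ presupposes compactness of the space of directions (equivalently of $\partial X$ in the cone topology), which holds only for \emph{proper} $X$. A complete $\CAT$ space of finite geometric dimension --- e.g.\ the Davis realization of a thick building over an infinite field --- need not be proper; an ultralimit does not repair this, since it produces a point of an asymptotic cone rather than of $\partial X$. Even the preliminary fact that an unbounded closed convex subset has non-empty visual boundary already requires the machinery of \cite{CL10} in the non-proper setting. (2) The radius bound via antipodes does not work. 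A subset of a $\mathrm{CAT}(1)$ space of intrinsic radius greater than $\pi/2$ need not contain two points at distance $\pi$: three points pairwise at distance $2\pi/3$ have radius $2\pi/3>\pi/2$ but diameter $2\pi/3<\pi$. Moreover, two boundary points at Tits distance exactly $\pi$ need not be joined by a geodesic line (the flat strip theorem only applies once a line is given, and producing one from $d_T=\pi$ fails without properness); and even if each $X_\alpha$ did contain such a line, the resulting family of parallel lines corresponds to a filtering family of convex subsets of the cross-section of the parallel set, whose non-empty intersection is precisely the statement you are trying to prove, one dimension down. In \cite{CL10} the bound is instead obtained constructively, by combining Kleiner's inequality $\dim\partial_T X\leq\dim X-1$ from \cite{Geomdim} with a separate intersection theorem for filtering families of convex subsets of finite-dimensional $\mathrm{CAT}(1)$ spaces, proved by induction on dimension. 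As you concede in your last sentence, that is the actual content of the cited result, so the proposal does not constitute a proof.
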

Recall that a family of subsets $\FFF$ of a given set is called {\bf filtering} if for all $E,F\in\FFF$ there exists $D\in\FFF$ such that $D\subseteq E\cap F$. 
We point out that Davis realizations of buildings of finite rank (and closed convex subcomplexes) are examples of complete $\CAT$ spaces of finite geometric dimension (see \cite{Geomdim}), since these are finite dimensional $\CAT$ cell complexes.

\subsection*{Combinatorial bordification of a building}
We now recall some terminology introduced in \cite{CL11}. Let $\Delta$ be a building with Davis realization $X$, and let $G=\Aut_0(\Delta)$ be its group of type-preserving simplicial isometries. Let $\Res_{\sph}(\Delta)$ denote the set of spherical residues of $\Delta$. Note that, identifying a point of $X$ with its support, the $G$-actions on $X$ and $\Res_{\sph}(\Delta)$ coincide. 

The set $\Res_{\sph}(\Delta)$ can be turned into a metric space using the so-called {\bf root-distance} (\cite[1.2]{CL11}), whose restriction on the set of chambers coincide with the gallery distance. By looking at the projections of any given spherical residue $R$ onto all $\sigma\in\Res_{\sph}(\Delta)$, one gets a map \[ \pi_{\Res}\co \Res_{\sph}(\Delta)\to\prod_{\sigma\in\Res_{\sph}(\Delta)}{\St(\sigma)}: R\mapsto (\sigma\mapsto\proj_{\sigma}(R)),\]
where $\St(\sigma)$ denotes the {\bf star} of $\sigma$, that is, the set of all residues containing $\sigma$ in their boundaries. Endow the above product with the product topology, where each star is a discrete set of residues. The {\bf combinatorial bordification} of $\Delta$, denoted $\CCC_{\sph}(\Delta)$, is then defined as the closure of the image of $\pi_{\Res}$: $\CCC_{\sph}(\Delta)=\overline{\pi_{\Res}(\Res_{\sph}(\Delta))}$ (\cite[2.1]{CL11}). In the sequel, we will also write $\Res_{\sph}(X)$ for $\Res_{\sph}(\Delta)$ and $\CCC_{\sph}(X)$ for $\CCC_{\sph}(\Delta)$.

For any spherical residue $x\in\Res_{\sph}(\Delta)$ and any sequence $(R_n)_{n\in\NN}$ of spherical residues converging to some $\xi\in\CCC_{\sph}(\Delta)$, define the {\bf combinatorial sector} $Q(x,\xi)$ pointing towards $\xi$ and based at $x$ as \[Q(x,\xi):=\bigcup_{k\geq 0}{\bigcap_{n\geq k}{\Conv(x,R_n)}},\] where $\Conv(x,R_n)$ denotes the convex hull in $\Res_{\sph}(\Delta)$ of the pair of residues $\{x,R_n\}$. This definition turns out to be indeed independent of the sequence $(R_n)$ converging to $\xi$. Moreover, each such sector is contained in some apartment of $\Delta$ (\cite[2.3]{CL11}). 

Finally, we recall from \cite[5.1]{CL11} that one can associate to every point $\xi$ in the visual boundary $\partial X$ of $X$ a {\bf transversal building} $\Delta^{\xi}$ of dimension strictly smaller than $\dim X$, on which the stabiliser $G_{\xi}$ of $\xi$ acts by type-preserving simplicial isometries.

\subsection*{Almost connected locally compact groups}
Before we can proceed with the proof of the results announced in the introduction, we need one more technical result. 

\begin{lemma}\label{lemma positive measure}
Let $G$ be an almost connected locally compact group. Then every measurable subgroup $H$ of $G$ of positive measure has finite index in $G$. 
\end{lemma}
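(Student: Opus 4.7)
The plan is to reduce the statement to the classical fact that a measurable subgroup of positive Haar measure in a locally compact group is automatically open, and then exploit almost connectedness to promote ``open'' to ``finite index''.

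First I would invoke the Steinhaus--Weil theorem: for any Haar measurable subset $A\subseteq G$ with $\mu(A)>0$, the difference set $A A^{-1}$ contains an open neighbourhood of the identity. Applied to $A=H$, this yields $HH\inv\subseteq H$ containing an identity neighbourhood, so $H$ is open in $G$. (This is the step alluded to in Remark~\ref{rkintro Serre}: the existence of a proper measurable subgroup of $(\RR,+)$ of positive measure would contradict Steinhaus, which is why the chain constructed there must consist of non-measurable subgroups.)

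Next, since $G$ is almost connected, the identity component $G^0$ is a closed normal subgroup with $G/G^0$ compact. Because $H$ is open, the intersection $H\cap G^0$ is an open subgroup of the connected group $G^0$; being open it is also closed, and since $G^0$ is connected it must equal $G^0$. Thus $G^0\subseteq H$. Passing to the quotient, $H/G^0$ is an open subgroup of the compact group $G/G^0$, hence of finite index, which gives $[G:H]<\infty$.

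The only non-trivial ingredient is the Steinhaus step; once one has openness of $H$, the remainder is a direct consequence of the definition of almost connectedness. No additional hypothesis on $G$ beyond local compactness and almost connectedness is needed, so this matches the statement of the lemma exactly.
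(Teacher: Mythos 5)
Your argument is correct and follows essentially the same route as the paper: a Steinhaus--Weil argument to show $H$ is open, then connectedness of $G^0$ and compactness of $G/G^0$ to upgrade openness to finite index. The only cosmetic difference is that the paper first intersects $H$ with a compact set to obtain a subset of positive \emph{finite} measure (using $\sigma$-compactness of $G$) before applying the Steinhaus-type theorem, a reduction you implicitly absorb into your citation of Steinhaus--Weil.
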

\begin{proof}
Since $G$ is compactly generated, it is in particular $\sigma$-compact. So there exists a compact subset $K\subseteq G$ such that $H\cap K$ has positive (finite) measure. Then by \cite[Cor.20.17]{HewittRoss}, there is an open neighbourhood $U$ of the identity such that $U\subseteq (H\cap K)(H\cap K)\inv\subseteq H$, so that $H$ is open. Hence $H$ contains the connected component $G^0$ of $G$. Since moreover $G/G^0$ is compact and the natural projection $\pi\co G\to G/G^0$ is open, $\pi(H)$ has finite index in $G/G^0$, whence the lemma.
\end{proof}

\begin{remark}\label{rk positive measure}
Note that Lemma~\ref{lemma positive measure} ensures that, given an almost connected locally compact group $G$, the $G$-actions on trees without fixed point constructed by Serre as in the introduction (see Remark~\ref{rkintro Serre}) are not measurable. Indeed, if $G$ is the countable union of a strictly increasing sequence of subgroups, then one of them must be non-measurable. For otherwise one of them would have positive measure by $\sigma$-additivity, and hence would have finite index in $G$ by Lemma~\ref{lemma positive measure}, a contradiction.
\end{remark}

We also record for future reference the following structure result for connected locally compact groups, which follows from the solution to Hilbert's fifth problem (see \cite[Thm.4.6]{Montgomzippinhilbert5}).

\begin{lemma}\label{lemma Hilbert5}
Let $G$ be a connected locally compact group. Then there is a compact normal subgroup $N$ of $G$ such that $G/N$ is a connected Lie group. 
\end{lemma}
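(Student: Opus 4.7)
The plan is to derive this directly from the Gleason--Yamabe form of the solution to Hilbert's fifth problem, which is precisely what \cite[Thm.4.6]{Montgomzippinhilbert5} provides. That theorem guarantees that for every connected locally compact group $G$ and every open neighbourhood $U$ of the identity in $G$, there exists a compact normal subgroup $N$ of $G$ with $N\subseteq U$ and $G/N$ a Lie group. So the first step of the proof is simply to pick \emph{some} neighbourhood $U$ of the identity (any choice will do, since we do not need $N$ to be small here) and invoke this theorem to extract a compact normal subgroup $N\trianglelefteq G$ with $G/N$ a Lie group.

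The second step is to upgrade ``Lie group'' to ``connected Lie group''. This is formal: the natural projection $\pi\co G\to G/N$ is continuous and surjective, and the continuous image of a connected space is connected, so $G/N$ inherits connectedness from $G$. Thus $G/N$ is a connected Lie group, which is exactly what is claimed.

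Since the heavy lifting is entirely done by the quoted theorem, there is no real obstacle to overcome in the proof itself; the only ``subtlety'' is locating a version of the Gleason--Yamabe / Montgomery--Zippin statement in a form that produces a compact normal subgroup with Lie quotient (rather than, say, merely a no-small-subgroups quotient), and the cited Montgomery--Zippin reference supplies exactly this. The entire write-up therefore reduces to two sentences: invoke \cite[Thm.4.6]{Montgomzippinhilbert5} to produce $N$, and observe that $G/N$ is connected as a continuous image of the connected group $G$.
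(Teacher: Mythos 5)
Your proposal matches the paper exactly: the lemma is recorded there without proof, justified solely by the citation to \cite[Thm.4.6]{Montgomzippinhilbert5}, which is precisely the Gleason--Yamabe/Montgomery--Zippin statement you invoke. Your additional observation that $G/N$ is connected as a continuous image of the connected group $G$ is correct and fills in the only (trivial) detail the citation leaves implicit.
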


The key result needed for the proof of Theorem \ref{mainthm} is now the following.

\begin{theorem}\label{thm K} 
Let $G$ be either a compact abelian group or the group $(\RR,+)$. Then $G$ has property $\FB$.
\end{theorem}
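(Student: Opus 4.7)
My strategy is to argue by induction on $\dim X$, where $X = |\Delta|$, with the base case $\dim X = 0$ trivial. The engine of the inductive step is a measurability-driven dichotomy: for $G$ compact abelian, Lemma~\ref{lemma positive measure} implies that every measurable subgroup of $G$ is either of measure zero or of finite index; for $G = (\RR,+)$, the Steinhaus theorem (a measurable subgroup of positive measure contains a neighbourhood of $0$) strengthens this to ``measure zero or all of $\RR$''. Applied to the stabiliser $G_R$ of each spherical residue $R$, either some $G_R$ has finite index---in which case Lemma~\ref{lemma reduction}(2) combined with the Bruhat--Tits fixed point theorem yields a $G$-fixed point in $X$ whose supporting residue is $G$-stabilised---or every $G_R$ has measure zero, and I would argue for a contradiction.

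Under this null-stabiliser hypothesis, the goal is to produce a $G$-fixed point in $X\cup\partial X$. I would apply Lemma~\ref{lemma CLytchack} to the filtering family of closed convex $G$-invariant non-empty subsets of $X$, extracting via Zorn's lemma either a minimal $G$-invariant $Y\subseteq X$ or a $G$-invariant subset of $\partial X$ of intrinsic radius at most $\pi/2$ (whose circumcentre then provides a $G$-fixed boundary point). On a minimal $Y$, the abelianness of $G$ forces $\Min(g)$ to be $G$-invariant for each $g\in G$, hence equal to $Y$ or empty. In the compact abelian case, the induced homomorphism from $G$ to the translation group of any Euclidean factor in the splitting of $Y$ has compact hence trivial image, forcing $G$ to fix $Y$ pointwise---contradicting null stabilisers unless $Y$ is already a point (in which case we are done). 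In the $\RR$ case, either $Y$ is a point, or the measurable---hence, by automatic continuity, continuous and linear---homomorphism $\RR\to\RR$ recording translation lengths along a split Euclidean factor is non-zero, yielding a $G$-fixed endpoint $\xi\in\partial X$.

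With a $G$-fixed $\xi\in\partial X$ in hand, I would pass to the transversal building $\Delta^{\xi}$, whose Davis realization has strictly smaller dimension, apply the induction hypothesis, and lift the resulting $G$-stabilised spherical residue in $\Delta^{\xi}$ back to a $G$-stabilised spherical residue in $\Delta$ via the combinatorial sector correspondence of \cite{CL11}. The main obstacles I anticipate are (i) verifying that measurability of the $G$-action on $\Delta$ transfers to measurability of the induced $G$-action on the transversal building $\Delta^{\xi}$, so that the inductive hypothesis applies, and (ii) correctly handling the descending-chain argument for minimal $G$-invariant closed convex subsets, including the case where no such minimal element exists and one must instead extract a $G$-fixed boundary point directly from Lemma~\ref{lemma CLytchack}.
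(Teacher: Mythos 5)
Your skeleton (induction on $\dim X$, the dichotomy from Lemma~\ref{lemma CLytchack}, passage to the transversal building, and the endgame ``some stabiliser has positive measure, hence finite index, hence Lemma~\ref{lemma reduction}(2) applies'') matches the paper's. But there is a genuine missing idea: the paper's proof rests entirely on first establishing that the action of $G$ on $X$ is \emph{locally elliptic} (its Claim~1, quoted from \cite[Thm 2.5]{CM11}, using compactness resp.\ divisibility of $G$), and your proposal neither proves nor uses this. Local ellipticity is what makes the family $\{X^F\}_{F\text{ finite}}$ of fixed-point sets non-empty and filtering (so that Lemma~\ref{lemma CLytchack} applies to \emph{it}), and, more importantly, it is the only available tool for your two flagged obstacles: it shows that any element fixing a point of the combinatorial bordification fixes a whole combinatorial subsector pointwise, whence the stabiliser of such a point is a \emph{countable} union of countable intersections of residue stabilisers. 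That is simultaneously the proof that the induced action on $\Delta^{\xi}$ is measurable and the mechanism for ``lifting back'': a $G$-fixed point of $\CCC_{\sph}(X)$ does \emph{not} by itself yield a stabilised residue of $\Delta$ (the Serre/Hamel-basis action of $\RR$ on a tree fixes an end but no vertex), so one needs the resulting countable cover of $G$ by measurable stabilisers plus $\sigma$-additivity and Lemma~\ref{lemma positive measure}. Your plan treats the lift-back as a formality and has no substitute for this argument.

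The minimal-invariant-convex-subset route you substitute for the paper's Claims~1--2 also contains unjustified steps. On a minimal $Y$ the displacement functions are constant and $G$ acts by Clifford translations on a Euclidean factor via a homomorphism $v\co G\to\RR^n$; but your conclusions ``compact hence trivial image'' and ``measurable hence continuous and linear'' do not follow. The action is only assumed measurable in the sense that stabilisers of spherical residues are Haar measurable; nothing makes $v$ (or the translation-length function) measurable, and abstract homomorphisms from $\RR/\ZZ$ or $(\RR,+)$ to $\RR^n$ can be nontrivial and wildly non-linear (Hamel bases again), in which case $G$ translates densely along the flat and fixes no point of $Y\cup\partial Y$. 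Since Remark~\ref{rkintro Serre} shows these very groups admit non-measurable fixed-point-free actions on trees, any correct argument must route the measurability of residue stabilisers into the geometry; your minimal-set analysis never does, so this branch cannot be repaired without essentially reverting to the paper's local-ellipticity argument.
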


\begin{proof}
Let $\Delta$ be a finite rank building on which $G$ acts measurably.
We prove that $G$ stabilises some spherical residue of $\Delta$ by induction on the dimension of $\Delta$. If $\Delta$ has zero dimension, then it is spherical and there is nothing to prove. Assume now that $\Delta$ has positive dimension. Let $X$ denote its Davis realization. We first need to know that each element of $G$ fixes some point of $X$.

\medskip \noindent
\underline{Claim 1}: \emph{The action of $G$ on $X$ is locally elliptic.}

\medskip \noindent
This follows from \cite[Thm 2.5]{CM11} in case $G$ is compact, and from \cite[Proof of Thm 2.5, Claim 7]{CM11} in case $G=\RR$ since $\RR$ is divisible abelian. 

\medskip \noindent
For a subset $F\subset G$, let $X^F$ denote the set of $F$-fixed points in $X$. Note that each $X^F=\bigcap_{g\in F}{\Min(g)}$ is a closed convex subset of $X$.

\medskip \noindent
\underline{Claim 2}: \emph{For each finite subset $F\subset G$, the set $X^F$ is non-empty.}

\medskip \noindent
Indeed, by Claim~1, each set $X^{\{g\}}$ for some $g\in G$ is non-empty. Let $F\subset G$ be finite and suppose that $X^F$ is non-empty. Let $g\in G$ and $x\in \Min(g)$. Since $G$ is abelian, $g$ stabilises $X^F$ and therefore fixes the projection of $x$ on $X^F$, so that $X^{F\cup\{g\}}$ is still non-empty (see for example \cite[Proof of Thm 2.5, Claim 4]{CM11}). The claim then follows by a straightforward induction.

\medskip \noindent
Since the subsets $X^F$ of $X$ for finite $F\subset G$ form a filtering family of non-empty closed convex subsets of $X$ by Claim~2, it follows from Lemma~\ref{lemma CLytchack} that either $\bigcap{X^F}$ is non-empty, where the intersection runs over all finite $F\subset G$, in which case the induction step stands proven, or the corresponding intersection of the visual boundaries $\bigcap{\partial X^F}$ is a non-empty subset of $\partial X$. We may thus assume that the group $G$ fixes some $\xi\in\partial X$. We now prove that $G$ already fixes a point in $X$. 
Let $X^{\xi}$ denote the transversal building to $X$ associated to $\xi$. Thus, $G$ acts on $X^{\xi}$.

\medskip \noindent
\underline{Claim 3}: \emph{Let $H$ be a subgroup of $G$. Suppose $H$ fixes some point $\zeta\in\CCC_{\sph}(X)$. Let $x\in\Res_{\sph}(X)$. Then every element of $H$ fixes pointwise a subsector of $Q(x,\zeta)$.}

\medskip \noindent
Indeed, let $h\in H$. Then Claim~1 yields a spherical residue $x_h\in\Res_{\sph}(X)$ which is fixed by $h$. It follows from \cite[Lem.4.4]{CL11} that $h$ fixes the combinatorial sector $Q(x_h,\zeta)$ pointwise. Since by \cite[Prop.2.30]{CL11} there is some $z_h\in \Res_{\sph}(X)$ such that $Q(z_h,\zeta)\subset Q(x,\zeta)\cap Q(x_h,\zeta)$, the conclusion follows.

\medskip \noindent
\underline{Claim 4}: \emph{The action of $G$ on $X^{\xi}$ is measurable.}

\medskip \noindent
Indeed, we have to check that the stabiliser $H$ in $G$ of a spherical residue of $X^{\xi}$ is measurable. Since $\Res_{\sph}(X^{\xi})\subseteq \CCC_{\sph}(X^{\xi})$ can be identified with a subset of $\CCC_{\sph}(X)$ by \cite[Thm 5.5]{CL11}, we may assume that $H$ is the stabiliser in $G$ of a point $\zeta\in\CCC_{\sph}(X)$. Let $x\in\Res_{\sph}(X)$, and for each spherical residue $y\in Q(x,\zeta)$, let $H_y$ denote the pointwise fixator in $G$ of the combinatorial sector $Q(y,\zeta)$. Note that in fact $H_y\leq H$ for all $y$. It follows from Claim~3 that $H$ is the union of all such $H_y$. Since $Q(x,\zeta)$ lies in some apartment and since apartments possess only countably many spherical residues, this union is countable. Thus, it is sufficient to check that the pointwise fixator in $G$ of a combinatorial sector $Q(y,\zeta)$ is measurable. Since this fixator is the (again countable) intersection of the stabilisers of the spherical residues in $Q(y,\zeta)$, the claim follows since such stabilisers are measurable by hypothesis. 

\medskip \noindent
It follows from Claim~4 and from the induction hypothesis that $G$ stabilises some spherical residue $\zeta\in \Res_{\sph}(X^{\xi})\subseteq \CCC_{\sph}(X^{\xi})$. Again, we may identify $\CCC_{\sph}(X^{\xi})$ with a subset of $\CCC_{\sph}(X)$, so that $G$ stabilises some point in $\CCC_{\sph}(X)$, again denoted by $\zeta$. Then as before, Claim~3 implies that $G$ is covered by countably many stabilisers of points of $X$. Since these are measurable, one of them, say $G_x$ for some $x\in X$, must have positive measure by $\sigma$-additivity. Then $G_x$ has finite index in $G$ by Lemma~\ref{lemma positive measure}. We can now complete the induction step using Lemma~\ref{lemma reduction}.
\end{proof}

\begin{corollary}\label{cor fixed point}
Let $G$ be an almost connected locally compact group acting measurably on a finite rank building $\Delta$ with Davis realization $X$. Assume that $G$ fixes a point in the combinatorial bordification of $\Delta$. Then $H$ already fixes a point in $X$.
\end{corollary}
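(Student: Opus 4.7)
The plan is to abstract the argument from the final paragraph of the proof of Theorem~\ref{thm K}: once $G$ fixes a point $\zeta\in\CCC_{\sph}(X)$, one upgrades this to a fixed point of $X$ by writing $G$ as a countable union of measurable subgroups, each of which already fixes a point of $X$, and then exploiting $\sigma$-additivity together with Lemmas~\ref{lemma positive measure} and \ref{lemma reduction}(2). Because the hypothesis directly supplies the bordification fixed point $\zeta$, the dimensional induction used in Theorem~\ref{thm K} can be bypassed.

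First I would verify \emph{local ellipticity}: every $g\in G$ fixes some point of $X$. For $g$ with $\overline{\langle g\rangle}$ compact, this is immediate by restricting the action to the closed abelian subgroup $\overline{\langle g\rangle}$ and applying Theorem~\ref{thm K}. When $\overline{\langle g\rangle}$ is non-compact I expect to appeal to a Caprace--Monod style structural result (in the spirit of \cite[Thm 2.5]{CM11} and Claim~7 of its proof), or alternatively to reduce via Lemma~\ref{lemma Hilbert5} using the almost connectedness of $G$. I expect this to be the main obstacle; once local ellipticity is secured, the remainder is routine combinatorial and measure-theoretic bookkeeping.

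Granted local ellipticity, I would then mimic Claim~3 in the proof of Theorem~\ref{thm K}: fix $x\in\Res_{\sph}(X)$; for each $g\in G$, pick a spherical residue $x_g$ fixed by $g$, invoke \cite[Lem.~4.4]{CL11} to see that $g$ fixes $Q(x_g,\zeta)$ pointwise, then use \cite[Prop.~2.30]{CL11} to find $y_g\in Q(x,\zeta)$ with $Q(y_g,\zeta)\subseteq Q(x,\zeta)\cap Q(x_g,\zeta)$, so that $g$ also fixes $Q(y_g,\zeta)$ pointwise. This yields the covering
\[
G=\bigcup_{y\in Q(x,\zeta)} G_{Q(y,\zeta)},
\]
where $G_{Q(y,\zeta)}$ denotes the pointwise fixator of $Q(y,\zeta)$ in $G$. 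Since by \cite[2.3]{CL11} the sector $Q(x,\zeta)$ is contained in a single apartment, which has only countably many spherical residues, this union is countable, and each $G_{Q(y,\zeta)}$ is an intersection of countably many stabilizers of spherical residues, hence Haar measurable by hypothesis.

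To conclude, I would apply $\sigma$-additivity of the Haar measure: some $G_{Q(y,\zeta)}$ must have positive measure, and Lemma~\ref{lemma positive measure} then forces it to have finite index in $G$. Since this subgroup fixes the point $y\in X$, Lemma~\ref{lemma reduction}(2) yields a global fixed point of $G$ in $X$, as required.
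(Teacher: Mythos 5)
Your argument is essentially identical to the paper's: the published proof of this corollary is literally a one-line back-reference to the final paragraph of the proof of Theorem~\ref{thm K}, which is exactly the covering-by-countably-many-sector-fixators argument you reproduce (Claim~3 there), followed by $\sigma$-additivity, Lemma~\ref{lemma positive measure} and Lemma~\ref{lemma reduction}(2). The one point you flag as a possible obstacle --- local ellipticity of the $G$-action, needed so that \cite[Lem.4.4]{CL11} applies to every $g\in G$ --- is indeed not automatic for a general almost connected locally compact group, and the paper does not supply it here either: in the proof of Theorem~\ref{thm K} it is Claim~1, available only because $G$ is compact abelian or $(\RR,+)$ there, and in the only place the corollary is subsequently invoked (Claim~2 in Section~\ref{section rk cor}) local ellipticity is added as an explicit hypothesis. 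So the corollary should be read as carrying an implicit local ellipticity assumption (or as applying only in that regime); with that hypothesis your proof is complete, and your hedging on this point identifies a genuine imprecision in the statement rather than a defect in your argument. (The statement also contains a typo: the conclusion should read ``$G$ already fixes a point'', not ``$H$''.)
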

\begin{proof}
This is what we have just established using Lemma~\ref{lemma positive measure} to conclude the proof of Theorem~\ref{thm K}.
\end{proof}

The following result is probably well known; since we could not find it explicitly stated in the published literature, we include it here with a complete proof.

\begin{theorem}\label{thm Lie}
Let $G$ be a connected Lie group. Then $G$ is a bounded product of one-parameter subgroups.
\end{theorem}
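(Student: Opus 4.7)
The plan is to exhibit a finite list of one-parameter subgroups $\alpha_1,\dots,\alpha_N\co\RR\to G$ such that $G=\alpha_1(\RR)\dots\alpha_N(\RR)$. The underlying local observation is that, for any basis $X_1,\dots,X_n$ of $\Lie(G)$, the smooth map $\Phi\co\RR^n\to G$, $(t_1,\dots,t_n)\mapsto\exp(t_1X_1)\dots\exp(t_nX_n)$, has differential the identity at the origin, so its image contains an open neighbourhood $U$ of $e$ lying inside the product set $P:=\exp(\RR X_1)\dots\exp(\RR X_n)$. Turning this local statement into a global bounded factorisation is what requires structure theory.

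I would first reduce to the case where $G$ is simply connected: a bounded factorisation in the universal cover $\tilde G$ pushes down to one in $G$ via the covering projection, since each one-parameter subgroup descends to a one-parameter subgroup. For simply connected $G$, I would invoke the Levi decomposition $\tilde G=R\cdot S$ with $R$ the simply connected solvable radical and $S$ a simply connected semisimple Levi factor, so it suffices to factor $R$ and $S$ separately. For $R$, a composition series of ideals $0\subset\mathfrak{r}_1\subset\dots\subset\mathfrak{r}_n=\Lie(R)$ together with an adapted basis makes $\Phi$ a diffeomorphism $\RR^n\to R$, so $R=P$ itself is a product of $n$ one-parameter subgroups. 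For $S$, the Iwasawa decomposition $S=KAN$ reduces the problem to three pieces: the abelian factor $A\cong\RR^d$ is trivial, the simply connected nilpotent factor $N$ is handled like $R$ via Malcev coordinates, and the compact connected $K$ remains.

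The key ingredient is thus the compact case, which I would isolate as a standalone claim. If $G$ is compact and connected, the neighbourhood $U$ above generates $G$, so the ascending chain $U\subseteq U^2\subseteq\dots$ is an open cover of $G$; compactness forces $G=U^m$ for some finite $m$, hence $G\subseteq P^m$, a product of $nm$ one-parameter subgroups (the same $n$ subgroups $\exp(\RR X_i)$ repeated $m$ times). The main obstacle is precisely this compactness step: for non-compact connected $G$ the analogous inclusion $G\subseteq P^m$ genuinely fails (already for $G=\RR$ with $U$ bounded), which is exactly why the detour through the Levi--Iwasawa decomposition is necessary to feed each non-compact piece into its own diffeomorphism argument rather than a covering argument.
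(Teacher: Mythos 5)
Your argument is correct in its overall architecture but takes a genuinely different route from the paper's, and it has one step that needs repair. On the comparison: for the reduction to the compact case, the paper simply cites Borel's result that a connected Lie group is the product of a maximal connected compact subgroup with finitely many one-parameter subgroups, whereas you rebuild that global decomposition by hand from the Levi and Iwasawa decompositions together with coordinates of the second kind on simply connected solvable groups; both work, yours being self-contained at the cost of more structure theory. For the compact case itself your argument is cleaner than the paper's: the paper works with the \emph{closures} $U_i=\overline{\exp(\RR x_i)}$, applies Baire's theorem to the compact set $A=U_1\cdots U_n$ to produce an identity neighbourhood inside a power of $A$, and must then observe at the end that each $U_i$ is a torus, hence itself a bounded product of one-parameter subgroups; your use of the differential of $\Phi$ at the origin places the identity neighbourhood directly inside $P=\exp(\RR X_1)\cdots\exp(\RR X_n)$ and so avoids both Baire and the final torus step. (Do replace $U$ by the symmetric neighbourhood $U\cap U^{-1}$ before asserting that $\bigcup_m U^m$ exhausts $G$; the symmetrized neighbourhood still lies in $P$, so nothing is lost.)

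The step that fails as written is the claim that ``the compact connected $K$ remains'' after passing to the universal cover: for a \emph{simply connected} semisimple group $S$ the factor $K$ in the Iwasawa decomposition $S=KAN$ is in general not compact --- for $S=\widetilde{\SL(2,\RR)}$ one has $K\cong\RR$. What is true is that $K$ is connected and simply connected with Lie algebra of compact type, hence isomorphic to $\RR^k\times K_{\mathrm{ss}}$ with $K_{\mathrm{ss}}$ compact, simply connected and semisimple; the Euclidean factor is trivially a product of one-parameter subgroups and your compactness argument applies verbatim to $K_{\mathrm{ss}}$. With that correction the proof is complete.
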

\begin{proof}
Note first that $G$ decomposes as a product of a maximal connected compact subgroup and of finitely many one-parameter subgroups (see for example \cite{Borelcpct}). We may thus assume that $G$ is compact connected. Let $x_1,\dots,x_n$ be a basis of the Lie algebra $\Lie(G)$, and for each $i\in\{1,\dots,n\}$, let $U_i=\overline{\exp(\RR x_i)}$ be the closure in $G$ of the one-parameter subgroup associated to $x_i$. Since $G$ is compact, each $U_i$ is compact and hence so is the bounded product $A=U_1\dots U_n$. In particular, $A$ is closed. Since $G$ is connected, it is generated by $A$, so that $G=\bigcup_{n\geq 1}{A^n}$. Now, by Baire's theorem, there exists an $n\in\NN$ such that $A^n$ has non-empty interior, and so $A^{2n}$ contains an open neighbourhood $U$ of the identity in $G$. Since $G$ is compact, there is a finite subset $F\subset G$ such that $G=FU$. Then $F\subset A^k$ for some $k\in\NN$ and so $G=A^{2n+k}$. Finally, note that each $U_i$ is a connected compact abelian Lie group, hence a torus (see for example \cite[Prop.2.42(ii)]{Compactlie}). Since clearly each torus is a bounded product of one-parameter subgroups, the conclusion follows.
\end{proof}

\begin{remark}\label{rk compact fg}
Note that the argument above in fact yields the following: \emph{A compact group that is generated by finitely many abelian subgroups has finite abelian width.}
\end{remark}

\begin{remark}\label{rk Lie faw}
It follows from Theorem~\ref{thm Lie} that any almost connected Lie group $G$ has finite abelian width. Indeed, since the connected component of the identity of a Lie group is open, $G$ is virtually connected. The claim then follows since $G^0$ has finite abelian width by Theorem~\ref{thm Lie}.
\end{remark}

\noindent
{\bf Proof of Corollary~\ref{corintro Lie}.} Let $G$ be an almost connected Lie group. Then $G$ is virtually connected since $G^0$ is open. By Lemma~\ref{lemma reduction} we may thus assume that $G$ is connected. Then by Theorem~\ref{thm Lie}, it is a bounded product of one-parameter subgroups. The conclusion then follows from Theorem~\ref{thm K}, together with Lemma~\ref{lemma reduction}. \hspace{\fill}$\Box$

\medskip
\noindent 
{\bf Proof of Corollary~\ref{corintro compact}.} The first statement is an immediate consequence of Theorem~\ref{thm K} together with Lemma~\ref{lemma reduction}. Then, since compact $p$-adic analytic groups are finitely generated by \cite[Cor.8.34]{padicanalytic}, they have property $\FB$ by Remark~\ref{rk compact fg}. Finally, profinite groups of polynomial subgroup growth also have finite abelian width by the main result of \cite{Bddgenprofinite}. \hspace{\fill}$\Box$


\section{Some remarks and the proof of Theorem~\ref{mainthm} concluded}\label{section rk cor}

In this section, we explain why Conjecture~\ref{conj tree} implies Conjecture~\ref{conj measurable}. Since Conjecture~\ref{conj tree} is known in case the building is a tree by a result of Tits, this will imply Theorem~\ref{thm tree}. The same argument yields the proof of Theorem~\ref{mainthm}, except that in this case Conjecture~\ref{conj tree} is not available and we use the assumption of finite abelian width instead.

\begin{remark}\label{remark FBP}
Let $G$ be either a compact group of finite abelian width or an almost connected Lie group. Then in fact $G$ satisfies a slightly stronger property than property $\FB$, which is the following {\bf property $\FBP$}: 

\begin{itemize}
\item[$\FBP$]
\emph{For each finite rank building $\Delta$ with Davis realization $X$, for each subgroup $Q\leq\Aut_0(X)$ having a global fixed point in $X$, any measurable action of $G$ on the $\CAT$ subcomplex $\Min(Q)\leq X$ by type-preserving cellular isometries has a global fixed point.}
\end{itemize}

Note that in case $Q$ is trivial this is just property $\FB$. To see that $G$ indeed satisfies this property $\FBP$ when $G$ is either compact abelian or the group $(\RR,+)$, remark first that Claim~1 and Claim~2 from the proof of Theorem~\ref{thm K} remain valid in this context, as well as Lemma~\ref{lemma CLytchack}. Moreover, setting $Y:=\Min(Q)\subseteq X$, we can identify the visual boundary $\partial Y$ with a subset of $\partial X$. Then Lemma~\ref{lemma CLytchack} either yields the desired conclusion or yields a fixed point $\xi\in\partial Y\subseteq \partial X$ for the $G$-action. Then $G$ acts on the closed convex subset $Y_1:=\Min(Q)\subseteq X^{\xi}$ of the transversal building $X^{\xi}$ for the induced action of $Q$ on $X^{\xi}$. Moreover, by \cite[Lem.4.4]{CL11}, each combinatorial sector $Q(x,\zeta)$ for some $x\in Y$ and some $\zeta\in Y_1\subset\CCC_{\sph}(X)$ is entirely contained in $Y$. Then the proofs of Claim~3 and Claim~4 go through without any change and we may apply the induction hypothesis to find a global fixed point for $G$ on $Y$. 

Finally, the case where $G$ is a compact group of finite abelian width or an almost connected Lie group follows from Lemma~\ref{lemma reduction}.
\end{remark}

We summarize this in the following lemma.
\begin{lemma}\label{lemma FBP}
Compact groups of finite abelian width and almost connected Lie groups have property $\FBP$.
\end{lemma}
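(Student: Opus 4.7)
The plan is to reduce the lemma, via Lemma~\ref{lemma reduction}, to the building-block case in which $G$ is either a compact abelian group or $(\RR,+)$, and then to mimic the inductive argument of Theorem~\ref{thm K} with the ambient space $X$ replaced by the closed convex $\CAT$ subcomplex $Y:=\Min(Q)$. Since $Q$ has a global fixed point in $X$, the set $Y$ is a nonempty closed convex subcomplex on which $G$ acts by hypothesis; as a closed convex subcomplex of the finite-dimensional $\CAT$ cell complex $X$, it inherits finite geometric dimension, which is exactly what Lemma~\ref{lemma CLytchack} requires.

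For the building-block case I would induct on the dimension of $\Delta$. Local ellipticity of the $G$-action (Claim~1 of Theorem~\ref{thm K}) transfers to $Y$ by projecting any $X$-fixed point of $g\in G$ to its image in $\Min(g)\cap Y$; the same projection argument then propagates through finite subsets $F\subset G$ exactly as in Claim~2, yielding a filtering family $\{Y^F\}$ of nonempty closed convex subsets of $Y$. Applying Lemma~\ref{lemma CLytchack} inside $Y$ either closes the induction step directly or produces a $G$-fixed point $\xi\in\partial Y\subseteq\partial X$.

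In the boundary case one passes to $Y_1:=\Min(Q)\cap X^{\xi}\subseteq X^{\xi}$; the decisive input, provided by \cite[Lem.4.4]{CL11}, is that for $x\in Y$ and $\zeta\in Y_1\subset\CCC_{\sph}(X)$ the combinatorial sector $Q(x,\zeta)$ is entirely contained in $Y$. This containment lets Claim~3 and Claim~4 go through verbatim: the $G$-stabiliser of any spherical residue of $Y_1$ is the countable union of the pointwise fixators of combinatorial sectors lying in $Y$, and each such fixator is a countable intersection of the measurable stabilisers furnished by the hypothesis. The induction hypothesis applied to $Y_1$ then supplies a $G$-fixed point $\zeta\in\CCC_{\sph}(X^{\xi})\subseteq\CCC_{\sph}(X)$, after which the $\sigma$-additivity argument together with Lemma~\ref{lemma positive measure} and Lemma~\ref{lemma reduction}(2) produces a $G$-fixed point in $Y$.

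The main obstacle will be the bookkeeping around the transversal subspace --- specifically, confirming that $Y_1$ is genuinely a closed convex subcomplex of $X^{\xi}$ on which the measurability hypothesis and the convex-hull description of combinatorial sectors transport cleanly from $X$, so that the induction at the reduced dimension can actually be invoked. Once the building-block case is settled, the remaining cases are routine: a compact group of finite abelian width is handled directly by Lemma~\ref{lemma reduction}(1) applied to its defining product of abelian subgroups, and an almost connected Lie group is handled by first passing to the open finite-index subgroup $G^0$ via Lemma~\ref{lemma reduction}(2), decomposing $G^0$ as a bounded product of one-parameter subgroups by Theorem~\ref{thm Lie}, and applying Lemma~\ref{lemma reduction}(1).
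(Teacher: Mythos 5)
Your argument is essentially identical to the paper's (which carries out exactly this plan in Remark~\ref{remark FBP}): reduce via Lemma~\ref{lemma reduction} and Theorem~\ref{thm Lie} to $G$ compact abelian or $(\RR,+)$, then rerun the induction of Theorem~\ref{thm K} on $Y=\Min(Q)$, using \cite[Lem.4.4]{CL11} to keep the combinatorial sectors inside $Y$ so that Claims~3 and~4 transfer. The one small correction: since property $\FBP$ only gives a $G$-action on $\Min(Q)$, not on all of $X$, local ellipticity on $Y$ should be obtained by applying \cite[Thm 2.5]{CM11} (and the divisibility argument) directly to the $\CAT$ complex $Y$, rather than by projecting $X$-fixed points of elements of $G$ onto $Y$.
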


The interest of this slightly more general fixed point property is that it allows to construct new examples of groups with property $\FB$ starting from known examples.

\begin{lemma}\label{lemma extension}
Let $G$ be a locally compact group and let $N\triangleleft G$ be a closed normal subgroup of $G$ such that $G/N$ has property $\FBP$. If $N$ has property $\FB$, then so has $G$.
\end{lemma}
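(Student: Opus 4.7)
The plan is to use property $\FB$ for $N$ to carve out an $N$-fixed subcomplex of the Davis realization, and then to apply property $\FBP$ for $G/N$ to this subcomplex. Let $G$ act measurably on a finite rank building $\Delta$ with Davis realization $X$. First I would check that the restricted $N$-action on $\Delta$ is still measurable, so that property $\FB$ applies to it: the stabiliser in $N$ of a spherical residue $R$ is $N\cap G_R$, the intersection of the closed subgroup $N$ of $G$ with a Haar measurable subset of $G$, which is Haar measurable in $N$. Property $\FB$ for $N$ then produces an $N$-fixed spherical residue, so the set $Y:=X^N$ is non-empty; it is moreover a closed convex $\CAT$ subcomplex of $X$. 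Normality of $N$ in $G$ forces $Y$ to be $G$-stable, since $n(gx)=g(g\inv ng)x=gx$ for all $g\in G$, $n\in N$ and $x\in Y$.

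Because $N$ fixes $Y$ pointwise, the translation length of every element of $N$ is zero everywhere on $Y$, so $\Min(N)=Y$ and the setup matches the hypothesis of property $\FBP$ with $Q=N$. The trivial action of $N$ on $Y$ lets the $G$-action on $Y$ descend to a type-preserving cellular action of $G/N$. The main step I expect to demand care is verifying that this descended $G/N$-action is measurable in the sense required by property $\FBP$. For a spherical residue $R$ whose cell lies in $Y$, the stabiliser $G_R$ contains $N$ and is in particular $N$-saturated, so $\pi\inv(\pi(G_R))=G_R$ for the quotient projection $\pi\co G\to G/N$; combined with Haar measurability of $G_R$ in $G$ and the standard compatibility between Haar measures on $G$ and on the locally compact quotient $G/N$, this yields Haar measurability of $\pi(G_R)$ in $G/N$.

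Applying property $\FBP$ for $G/N$ to its action on $\Min(N)=Y$ then supplies a $G/N$-fixed point $x\in Y$. Since the $G$-action on $Y$ factors through $G/N$, the point $x$ is in fact fixed by the whole of $G$, and $G$ stabilises the spherical residue of $\Delta$ supporting $x$, as required. The principal obstacle in writing this out carefully is the measure-theoretic bookkeeping of Haar measurable stabilisers under restriction to a closed subgroup and under passage to a quotient; everything else is essentially formal and geometric, and relies only on the normality of $N$ together with the two fixed-point properties already at our disposal.
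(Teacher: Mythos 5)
Your argument is the same as the paper's, which runs in four lines: $Y:=\Min(N)$ is non-empty by property $\FB$ for $N$ and is $G$-stable by normality, the $G$-action on $Y$ descends to a measurable $G/N$-action, and property $\FBP$ for $G/N$ (with $Q$ the image of $N$) finishes. All of the geometric bookkeeping you supply (closedness and convexity of $X^N$, the identification $\Min(N)=X^N$, the descent of the action, the saturation argument $\pi\inv(\pi(G_R))=G_R$ for residues supported in $Y$) is correct and merely makes explicit what the paper leaves implicit.

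The one place where your added detail goes wrong is the justification that the restricted $N$-action is measurable. The principle you invoke --- that the intersection of a closed subgroup $N\leq G$ with a Haar measurable subset of $G$ is Haar measurable for the Haar measure \emph{of $N$} --- is false in general: take $G=\RR^2$ and $N=\RR\times\{0\}$; then $N$ is a Haar null subset of $G$, so \emph{every} subset of $N$ (for instance $V\times\{0\}$ with $V$ a non-Lebesgue-measurable subgroup of $\RR$ as in Remark~\ref{rkintro Serre}) is Haar measurable in $G$ by completeness, while failing to be Haar measurable in $N$. Restriction of Haar measurability to a closed subgroup is only automatic when the subgroup is open, or under some Borel/universal measurability hypothesis on the stabilisers. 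To be fair, the paper's own proof passes over this point in complete silence (it simply asserts that $\Min(N)$ is non-empty "by hypothesis"), and in the application to Theorem~\ref{mainthm} the relevant stabilisers are handled by other means; but as a freestanding justification your stated reason does not hold, and you should either add a measurability hypothesis strong enough to restrict to $N$ or flag this as an implicit assumption rather than derive it from a false general fact.
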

\begin{proof}
Suppose $G$ acts measurably on a building $\Delta$ with Davis realization $X$. By hypothesis, $Y:=\Min(N)\subseteq X$ is non-empty and stabilised by $G$. Moreover, the $G$-action on $Y$ coincides with the induced $G/N$-action on $Y$, which is still measurable. Thus $G/N$ fixes a point in $Y$ by hypothesis. The conclusion follows.
\end{proof}

\noindent
{\bf Proof that Conjecture~\ref{conj tree} implies Conjecture~\ref{conj measurable}.} Let $G$ be an almost connected locally compact group acting measurably by type-preserving simplicial isometries on a building $\Delta$ with Davis realization $X$. Assume that Conjecture~\ref{conj tree} holds. We have to prove that $G$ fixes a point in $X$.

\medskip \noindent
\underline{Claim 1}: \emph{Let $H$ be a group acting locally elliptically on $X$. Then $H$ fixes a point in the combinatorial bordification of $X$.}

\medskip \noindent
This follows from a straightforward induction on $\dim X$ using Conjecture~\ref{conj tree} and \cite[Thm 5.5]{CL11}.

\medskip \noindent
\underline{Claim 2}: \emph{Let $H$ be an almost connected locally compact group acting measurably and locally elliptically on $X$. Then $H$ already fixes a point in $X$.}

\medskip \noindent
Since $H$ fixes a point in the combinatorial bordification of $X$ by Claim~1, the claim follows from Corollary~\ref{cor fixed point}.

\medskip 

Let $N$ be a compact normal subgroup of the connected component $G^0$ of $G$ such that $G^0/N$ is a connected Lie group (see Lemma~\ref{lemma Hilbert5}). 

By \cite[Thm 2.5]{CM11}, we know that $N$ acts locally elliptically on $X$, and hence fixes a point in $X$ by Claim 2. Consider now the induced action of the connected Lie group $G^0/N$ on the fixed point set $\Min(N)$ of $N$ in $X$. Since $G^0/N$ has property $\FBP$ by Lemma~\ref{lemma FBP}, it fixes a point in $\Min(N)$. This shows that $G^0$ fixes a point of $X$. 

In turn, one can consider the action of the compact group $G/G^0$ on $\Min(G^0)\subseteq X$. This is a locally elliptic action by \cite[Thm 2.5]{CM11}, and hence the $G$-action on $X$ is locally elliptic. Thus Claim 2 yields the desired fixed point for the $G$-action on $X$. \hspace{\fill}$\Box$

\begin{remark}\label{rk iterated transversal}
Note that if, given a specific building $\Delta$, we want to show that any almost connected locally compact group $G$ acting measurably by type-preserving simplicial isometries on $\Delta$ has a global fixed point, it is sufficient to check that Conjecture~\ref{conj tree} holds for group actions on $\Delta$, as well as on the ``iterated transversal buildings'' of $\Delta$ (see the proof of Claim~1 above). Here, we mean by ``iterated transversal buildings'' of $\Delta$ either the transversal buildings to $\Delta$, or the transversal buildings to these transversal buildings, and so on.  

In particular, if Conjecture~\ref{conj tree} holds for group actions on trees, then so does Theorem~\ref{thm tree}. More generally, remark that if $\Delta$ is of type $(W,S)$, then the type of a ``iterated transversal building'' of $\Delta$ is a subgroup of $W$. So, for example, if Conjecture~\ref{conj tree} holds for group actions on Euclidean (respectively, right-angled) buildings, then Conjecture~\ref{conj measurable} also holds for the same class of buildings.
\end{remark}

\medskip
\noindent
{\bf Proof of Theorem~\ref{thm tree}.} As mentioned in Remark~\ref{rk iterated transversal} above, this follows from the fact that Conjecture~\ref{conj tree} is true when the building is a tree (see for example \cite[6.5 Exercise 2]{Serre}). \hspace{\fill}$\Box$

\medskip
\noindent
{\bf Proof of Theorem~\ref{mainthm}.}
Let $G$ be an almost connected locally compact group of finite abelian width. Thus $G$ is a bounded product of abelian subgroups, which we may assume to be closed, hence also locally compact. Therefore, by Lemma~\ref{lemma reduction}(1), it is sufficient to prove the theorem when $G$ is abelian, which we assume henceforth.

Let $G^0$ denote the connected component of $G$. By Lemma~\ref{lemma Hilbert5} we know that there exists some compact normal subgroup $N$ of $G^0$ such that $G^0/N$ is a connected Lie group. Since by assumption $G^0$ is abelian, we know by Lemma~\ref{lemma FBP} that $N$ and $G^0/N$ satisfy property $\FBP$, and hence that $G^0$ has property $\FB$ by Lemma~\ref{lemma extension}. Finally, since $G/G^0$ is compact abelian, the same argument yields that $G$ has property $\FB$, as desired.
\hspace{\fill}$\Box$

\begin{remark}
Note that, by the same argument as above, Conjecture~\ref{conj measurable} now reduces to proving that compact groups have property $\FBP$. Since every compact group is a product of its identity component with a totally disconnected closed subgroup (\cite[Thm.9.41]{Compactlie}), Lemma~\ref{lemma reduction} in turn reduces the problem to compact connected and profinite groups. We now mention some consequences of Lemma~\ref{lemma FBP} in each of these two cases.

If $G$ is compact connected, then there exists a family $\{S_j \ | \ j\in J\}$ of simple simply connected compact Lie groups such that $G$ is a quotient of $\prod_{j\in J}{S_j}\times Z_0(G)$ by a closed central subgroup, where $Z_0(G)$ denotes the identity component of the center of $G$ (\cite[Thm.9.24]{Compactlie}). Thus, in that case, Lemma~\ref{lemma reduction} reduces the problem to compact connected groups of the form $\prod_{j\in J}{S_j}$. Note that the simple simply connected compact Lie groups are classified and belong to countably many isomorphism classes. Index these classes by $\NN$. Then one can write $G$ as a countable product $G=\prod_{i\in \NN}{T_i}$, where $T_i$ is the product of all $S_j$ belonging to the $i$-th isomorphism class. Moreover, each $T_i$ has property $\FBP$. Indeed, let $S$ be a representative for the $i$-th class, so that $T_i$ is isomorphic to a product $\prod_{j\in I}{S}$ of copies of $S$. By Theorem~\ref{thm Lie}, the group $S$ is boundedly generated by compact abelian subgroups $A_1,\dots,A_n$. Hence $T_i$ is boundedly generated by the compact abelian subgroups $\prod_{j\in I}{A_1},\dots,\prod_{j\in I}{A_n}$, whence the claim. Thus, if only finitely many isomorphism classes appear in the product decomposition of $G$, then $G$ has property $\FBP$.

If $G$ is a profinite group, then $G$ is a closed subgroup of a product $\prod_{j\in J}{S_j}$ of finite groups. Suppose that $G=\prod_{j\in J}{S_j}$. Then, as in the case of compact connected groups, we may express $G$ as a countable product $G=\prod_{i\in \NN}{T_i}$, where $T_i$ is this time the product of all $S_j$ of order $i$. Again, the same argument shows that each $T_i$ has property $\FBP$, since clearly a group of order $i$ is boundedly generated by $i$ abelian subgroups. 
Finally, as noted in Corollary~\ref{corintro compact}, profinite groups of polynomial subgroup growth also have finite abelian width and thus property $\FBP$.
\end{remark}


\section{One-parameter subgroups of real Kac--Moody groups}\label{section cor KM}

In this section, we prove Theorem~\ref{thmintro KM} and Corollaries~\ref{cor KMR}, \ref{corintro KM2} and \ref{corintro KM3} from the introduction. 

\medskip
\noindent
{\bf Proof of Theorem~\ref{thmintro KM}.}
Since a measurable homomorphism from an almost connected Lie group $H$ into a Kac--Moody group $G$ yields measurable actions of $H$ on the associated positive and negative buildings of $G$, the result readily follows from Corollary~\ref{corintro Lie}. \hspace{\fill} $\Box$

\medskip
\noindent
Let now $k$ be either $\RR$ or $\CC$, and let $G=\GGG(k)$ be a real or complex adjoint Kac--Moody group of finite rank (see \cite{Tits87}).  
The group $G$ comes equipped with a root group datum $\{U_{\alpha} \ | \ \alpha\in\Phi^{\re}\}$ indexed by the real roots of $G$, and we assume that these root groups generate $G$. Write $I=\{1,\dots,\rk(G)\}$. Then, choosing a root basis $\{\alpha_i \ | \ i\in I\}$, each subgroup of $G$ of the form $X_i:=\langle U_{-\alpha_i},U_{\alpha_i}\rangle$ is a copy of $\SL(2,k)$, which we endow with its natural topology. We next endow $G$ with the so-called Kac--Peterson topology (\cite{Kacpetersontopo}), which is the finest topology such that for all $n\in \NN$ and all $i_1,\dots,i_n\in I$, the multiplication map \[X_{i_1}\times\dots\times X_{i_n}\to G: (x_1,\dots,x_n)\mapsto x_1\dots x_n\] is continuous, where each product $X_{i_1}\times\dots\times X_{i_n}$ has the product topology. This turns $G$ into a connected Hausdorff topological group (\cite[4G, p.163]{Kacpetersontopo2}, \cite[Prop.5.15, Rk.5.16]{HartKohlmars}). Moreover, each $X_i$ is closed in $G$, and the induced topology from $G$ coincides with the natural topology of $\SL(2,k)$. Note however that $G$ is not locally compact.

\begin{lemma}\label{lemma parabolics closed}
The finite type parabolic subgroups of $G$ of each sign are closed in $G$. In particular, the $\sigma$-algebra on $G$ generated by these parabolics is contained in the Borel algebra of $G$.
\end{lemma}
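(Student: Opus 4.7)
My plan is to realise each finite type parabolic $P_J^{\pm}$ as the stabiliser of a single point in the Davis realization of the corresponding building, and then to invoke the universal property defining the Kac--Peterson topology. By symmetry it suffices to treat the positive case. Since $W_J$ is finite, the standard residue $R_J$ in the positive building $\Delta^+$ is spherical, so the Davis realization $X^+:=|\Delta^+|$ contains a unique cell $c_J$ whose supporting residue is $R_J$. Let $\iota_J\in X^+$ be the barycenter of $c_J$. Because $G$ acts on $\Delta^+$ by type-preserving simplicial isometries, $g\in P_J^+$ if and only if $g$ stabilises $R_J$, which in turn is equivalent to $g\cdot\iota_J=\iota_J$. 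Hence $P_J^+$ coincides with the stabiliser of $\iota_J$ for the $G$-action on $X^+$.

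It will then suffice to show that the orbit map $\phi\co G\to X^+$, $g\mapsto g\cdot\iota_J$ is continuous with respect to the Kac--Peterson topology on $G$ and the $\CAT$ metric topology on $X^+$; granted this, $P_J^+=\phi\inv(\{\iota_J\})$ is closed in $G$, because $X^+$ is Hausdorff. By the defining universal property of the Kac--Peterson topology, $\phi$ is continuous precisely when, for every word $(i_1,\dots,i_n)\in I^n$, the composition
\[X_{i_1}\times\cdots\times X_{i_n}\to G\xrightarrow{\phi} X^+,\quad (x_1,\dots,x_n)\mapsto (x_1\cdots x_n)\cdot\iota_J,\]
is continuous. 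Proceeding by induction on $n$, this reduces to the joint continuity of the single-factor action map $X_i\times X^+\to X^+$. The latter I would deduce from the fact that $X_i\cong\SL(2,k)$ is a Lie group and that its action on $X^+$, read off from the root group datum, restricts on the orbit of any point $p\in X^+$ to a smooth $\SL(2,k)$-action on a finite-dimensional cellular subcomplex of $X^+$ containing that orbit. The negative parabolics are handled by the symmetric argument in the negative building.

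The ``in particular'' statement is then immediate: closed sets are Borel, so any $\sigma$-algebra generated by closed sets is contained in the Borel $\sigma$-algebra of $G$. The main obstacle is precisely the joint continuity of the elementary action map $X_i\times X^+\to X^+$, which requires reconciling the algebraic description of the root-group action on $\Delta^+$ with the $\CAT$ metric on $X^+$; the key point is that for fixed $p$ the $X_i$-orbit of $p$ lies in a single finite-dimensional cellular subcomplex (controlled by an apartment through the chamber supporting $p$ together with the rank-one residue of type $\{i\}$ through that chamber) on which $\SL(2,k)$ acts smoothly, yielding the desired joint continuity by elementary Lie theory.
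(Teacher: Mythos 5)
Your reduction to the continuity of the orbit map $\phi\co G\to X^+$, $g\mapsto g\cdot\iota_J$, is exactly where the argument breaks down: that map is \emph{not} continuous for the Kac--Peterson topology, and the obstacle you flagged (joint continuity of $X_i\times X^+\to X^+$) is not a technical point to be checked but a false statement. Take $p$ to be the point of $X^+$ supported on the fundamental chamber $C_0$. Its $X_i$-orbit is the set of points supported on the chambers of the rank-one residue of type $\{i\}$ through $C_0$; this is a uniformly discrete subset of $X^+$ (the Davis realization has finitely many isometry types of cells, so points supported on distinct chambers are uniformly separated), and it has more than one element since the building is thick --- indeed it is in bijection with $\mathbf{P}^1(k)$. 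A continuous map from the connected group $X_i\cong\SL(2,k)$ into a discrete set is constant, so the orbit map $X_i\to X^+$ already fails to be continuous in the group variable alone, before any question of joint continuity arises. Equivalently: if $\phi$ were continuous for $J=\emptyset$, then $B^+=\phi\inv\bigl(B(\iota_{\emptyset},\epsilon)\bigr)$ would be open for $\epsilon$ smaller than the separation constant, and a connected group such as $G$ has no proper open subgroup. So no argument that proceeds through continuity of the $G$-action on the Davis realization (or on the discrete set of spherical residues) can establish the lemma; the identification of $P_J^+$ with the stabiliser of $\iota_J$ and the universal property of the final topology are both correct, but they cannot be combined as you propose.

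The paper avoids the building entirely and argues group-theoretically: a finite type parabolic admits a Levi decomposition $P=L\ltimes U$; the Levi factor $L$ is locally compact in the Kac--Peterson topology and hence closed (a locally compact subgroup of a Hausdorff topological group is closed), the unipotent radical $U$ is closed by adapting \cite[Prop.5.11]{HartKohlmars}, and $P$ is the topological product of $L$ and $U$, whence closed. Some argument of this internal, algebraic--topological kind is unavoidable here.
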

\begin{proof}
Following \cite[6.2.2]{theseBR}, a finite type parabolic subgroup $P$ of $G$ has a Levi decomposition $P=L\ltimes U$, with $L$ the Levi factor and $U$ the unipotent radical of $P$. Note that $L$ is closed in $G$ since it is locally compact by \cite[Rk.5.13]{HartKohlmars}. Moreover, by a straightforward adaptation of the proof of \cite[Prop.5.11]{HartKohlmars}, the subgroup $U$ is closed as well. Since $P$ is the topological product of $L$ and $U$, the claim follows.

We remark that in the $2$-spherical situation this is in fact an immediate consequence of \cite[Thm.1 and Prop.3.20]{HartKohlmars}.
\end{proof}

Remember that Corollary~\ref{cor KMR} is well-known for finite-dimensional Lie groups (see e.g. \cite[Prop.2.10]{HofMorliegr}).

\begin{lemma}\label{lemma cor KM Lie}
Let $H$ be a Lie group. Then every $\alpha\in\Hom_c(\RR,H)$ is of the form $\alpha(t)=\exp\ad(tx)$ for some $x\in\Lie(H)$.
\end{lemma}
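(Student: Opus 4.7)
The plan is to reduce the statement to the classical correspondence, for a finite-dimensional Lie group $H$, between continuous one-parameter subgroups of $H$ and elements of $\Lie(H)$ via the exponential map. This correspondence is essentially the content of the already-cited \cite[Prop.~2.10]{HofMorliegr}, so the proof amounts to applying that fact and reconciling the notation $\exp\ad(tx)$ used in the statement with the usual Lie-theoretic $\exp_H(tx)$.

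Concretely, given $\alpha\in\Hom_c(\RR,H)$, I would first invoke the classical fact that a continuous homomorphism between Lie groups is automatically smooth, which allows one to define $x:=\alpha'(0)\in T_e H=\Lie(H)$. Then both $\alpha$ and $t\mapsto\exp_H(tx)$ are integral curves of the left-invariant vector field on $H$ determined by $x$, passing through the identity at $t=0$, so uniqueness of solutions forces $\alpha(t)=\exp_H(tx)$ for every $t\in\RR$.

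To match the notation of the statement, which is inherited from the adjoint Kac--Moody setting of the preceding discussion, I would observe that $H$ is there realized (or is being identified with its image) inside $\Aut(\Lie(H))$ via its adjoint representation; under this identification $\exp_H(tx)$ becomes the automorphism $\exp(\ad(tx))$, written $\exp\ad(tx)$ in the paper's conventions. No substantive obstacle arises: the content is purely classical Lie theory, and the only thing to check is the notational dictionary between $\exp_H$ and $\exp\ad$ induced by the adjoint identification.
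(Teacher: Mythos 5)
Your argument is correct and matches the paper, which offers no proof of this lemma at all but simply cites it as the classical correspondence between continuous one-parameter subgroups and Lie algebra elements (\cite[Prop.2.10]{HofMorliegr}); you have merely supplied the standard derivation of that cited fact. Your notational reconciliation of $\exp_H(tx)$ with $\exp\ad(tx)$ via the adjoint identification is also the intended reading, since in the application $H$ is a bounded subgroup of an \emph{adjoint} Kac--Moody group.
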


Denote by ${\bf X}=(X_{+},X_{-})$ the twin building associated to $G$.

\begin{lemma}\label{lemma cor KM CM}
Let $x_+$ and $x_-$ be spherical residues in $X_+$ and $X_-$, respectively. Then the stabiliser $G_{x_+,x_-}$ in $G$ of these two residues is closed in $G$. Moreover, with the induced topology, it has the structure of a finite-dimensional almost connected Lie group $H\cong G_{x_+,x_-}$, and every vector $x\in\Lie(H)$ can be identified with an $\ad$-locally finite vector of $\Lie(G)$.
\end{lemma}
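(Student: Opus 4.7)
The plan is to identify $G_{x_+,x_-}$ with the intersection of two finite-type parabolic subgroups of opposite signs, and then to invoke known structural results for such intersections. Since $x_\pm$ is a spherical residue in $X_\pm$, its stabiliser $G_{x_\pm}$ is a finite-type parabolic subgroup $P_\pm$ of sign $\pm$, and therefore $G_{x_+,x_-}=P_+\cap P_-$. The closedness of $G_{x_+,x_-}$ in $G$ is then immediate from Lemma~\ref{lemma parabolics closed}.

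To exhibit the Lie group structure, I would first observe that the pair $(x_+,x_-)$ can, up to a $G$-translate, be assumed to lie in a common twin apartment of ${\bf X}$; the intersection $P_+\cap P_-$ then coincides with the common Levi factor $L$ of $P_+$ and $P_-$ in the Levi decomposition from \cite[6.2.2]{theseBR} recalled in the proof of Lemma~\ref{lemma parabolics closed}. By \cite[Rk.5.13]{HartKohlmars}, this Levi $L$ is a finite-dimensional reductive Lie group whose natural topology coincides with the topology induced from the Kac--Peterson topology on $G$. Since the Weyl group of the spherical type of $(x_+,x_-)$ is finite, $L$ has finitely many connected components and is therefore almost connected. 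Setting $H:=L$ yields the desired Lie group structure on $G_{x_+,x_-}$.

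For the third assertion, I would use the embedding $\Lie(H)\hookrightarrow\Lie(G)$: the Lie subalgebra $\Lie(H)$ is generated by a Cartan subalgebra $\mathfrak{h}\subseteq\Lie(G)$ and finitely many real-root vectors $e_\alpha$. By the root space decomposition of $\Lie(G)$, elements of $\mathfrak{h}$ act $\ad$-diagonalisably on $\Lie(G)$, while the real-root vectors act $\ad$-locally nilpotently, a basic feature of Kac--Moody Lie algebras (and precisely what underlies the very definition of the root groups $U_\alpha$). Writing any $x\in\Lie(H)$ via the Jordan decomposition inside the finite-dimensional reductive Lie algebra $\Lie(H)$ as a commuting sum of a semisimple and a nilpotent part, one reduces to these two classes of elements; both are $\ad$-locally finite on $\Lie(G)$, hence so is $\ad(x)$.

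The main obstacle I expect lies in the first step of the second paragraph: one has to verify that any pair of spherical residues of opposite signs lies, up to a $G$-translate, in a common twin apartment, and then to keep track of the induced Lie topology along this conjugation. Once this is in place, the remaining assertions reduce to direct applications of \cite{HartKohlmars}, \cite{theseBR}, and classical facts on the adjoint representation of Kac--Moody Lie algebras.
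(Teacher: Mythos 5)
Your identification of $G_{x_+,x_-}$ with $P_+\cap P_-$ and the deduction of closedness from Lemma~\ref{lemma parabolics closed} are fine. The genuine gap is the assertion that, after placing $(x_+,x_-)$ in a common twin apartment, ``the intersection $P_+\cap P_-$ coincides with the common Levi factor $L$''. This is only true when $x_+$ and $x_-$ are \emph{opposite} residues. For an arbitrary pair of spherical residues of opposite signs, the intersection of their stabilisers has a \emph{refined} Levi decomposition $G_{x_+,x_-}=L\ltimes U$ with a generally nontrivial unipotent radical $U$ (this is \cite[Prop.3.6]{CM06}, where $L$ is the Levi attached to the pair of \emph{parallel} residues $\proj_{x_+}(x_-)$, $\proj_{x_-}(x_+)$, not to $x_\pm$ themselves). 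Your argument silently discards $U$, and with it the main technical content of the statement: one must show that $U$ is itself a finite-dimensional connected Lie group. The paper does this by showing (via \cite[Lem.3.2, Lem.3.3, Cor.3.5]{CM06}) that $U$ is a bounded product of finitely many root groups, each carrying its Lie group topology by \cite[Cor.5.12]{HartKohlmars}, so that $\Lie(U)$ is a finite direct sum of root spaces; the Levi factor $L$ is then handled essentially as you do, using \cite[Cor.5.12, Rk.5.13]{HartKohlmars} and the decomposition of $L$ as a torus times a group generated by root groups. As written, your proof only establishes the lemma for opposite residues.

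Two secondary remarks. First, your reduction ``up to a $G$-translate'' is harmless for the existence of a Lie group structure but is not needed once one uses the refined Levi decomposition, which is available for the given pair directly. Second, for the $\ad$-local finiteness you sketch a Jordan-decomposition argument inside the reductive algebra $\Lie(L)$; this can be made to work, but note that once $U$ is restored you must also treat the summand $\Lie(U)$ (which is fine: it is spanned by real root vectors, hence $\ad$-locally nilpotent), and over $\RR$ one should be careful that semisimple elements of $\Lie(L)$ need not be $H$-conjugate into the standard Cartan. The paper sidesteps all of this by citing \cite[Theorems 1 and 2]{Kacpeterson3}, which assert directly that the relevant subalgebras consist of $\ad$-locally finite elements of $\Lie(G)$; I would recommend doing the same rather than reproving it.
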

\begin{proof}
It follows from \cite[Prop.3.6]{CM06} that $G_{x_+,x_-}$ possesses a Levi decomposition $G_{x_+,x_-}=L\ltimes U$ with Levi factor $L$ and with unipotent radical $U$ associated to parallel residues in $X_+$ and $X_-$ (see \cite[3.2.1]{CM06}). We show that $L$ and $U$ are both almost connected Lie groups, whence the structure of almost connected Lie group on $G_{x_+,x_-}$. 

By \cite[Lem.3.3]{CM06} together with \cite[Cor.3.5]{CM06}, the subgroup $U$ is also the unipotent radical associated to a pair of chambers in $X_+$ and $X_-$. It follows by \cite[Lem.3.2]{CM06} that $U$ is a bounded product of finitely many root groups, which carry the Lie group topology by \cite[Cor.5.12]{HartKohlmars}. In particular, $U$ is connected. Moreover, the Lie algebra of $U$ is the direct sum of the finitely many Lie algebras of the root groups which boundedly generate $U$, and is therefore finite-dimensional. Hence $U$ is a Lie group, as desired.

The Levi factor $L$ is a Lie group because of \cite[Cor.5.12 and Rk.5.13]{HartKohlmars}. It is almost connected, since it decomposes as a product of a torus $T$ (homeomorphic to $(k^*)^{\rk(G)}$) with a subgroup generated by root groups.

The claim about $\ad$-locally finiteness can be found in \cite[Theorems 1 and 2]{Kacpeterson3}.
\end{proof}

\noindent
{\bf Proof of Corollary~\ref{cor KMR}.}
Let $\alpha\in\Hom_c(\RR,G)$. By Theorem~\ref{thmintro KM} together with Lemma~\ref{lemma parabolics closed}, we know that $\alpha(\RR)$ is bounded in $G$. The conclusion then follows from Lemmas~\ref{lemma cor KM Lie} and \ref{lemma cor KM CM}. \hspace{\fill} $\Box$

\medskip
\noindent
{\bf Proof of Corollary~\ref{corintro KM2}.} 
Let $\alpha\co G_1\to G_2$ be a measurable homomorphism between adjoint real or complex Kac--Moody groups $G_1, G_2$. Note first that by Theorem~\ref{thmintro KM} together with Lemmas~\ref{lemma parabolics closed} and \ref{lemma cor KM CM}, the image of any measurable homomorphism $\beta\co\SL(2,k)\to G_2$ is contained in a Lie group. In particular such a $\beta$ must be continuous by \cite[Theorem 1]{automcont}. It follows that for any copies $X_{i_1},\dots,X_{i_n}$ of $\SL(2,k)$ in $G_1$, the map $\overline{\alpha}$ in the following commutative diagram is continuous:

\begin{equation*}\begin{CD}
X_{i_1}\times\dots\times X_{i_n} @>>> G_1\\
@V\overline{\alpha}VV @VV\alpha V\\
\alpha(X_{i_1})\times\dots\times \alpha(X_{i_n}) @>>> G_2.
\end{CD}\end{equation*}
\noindent
Continuity of $\alpha$ then follows by definition of the Kac--Peterson topology. \hspace{\fill} $\Box$

\medskip
\noindent
{\bf Proof of Corollary~\ref{corintro KM3}.}
Let $\alpha\co G_1\to G_2$ be a measurable isomorphism between adjoint real or complex Kac--Moody groups $G_1, G_2$. Recall from \cite[Thm.6.3]{CM06} that $\alpha$ is standard whenever it maps bounded subgroups to bounded subgroups. Let thus $H$ be a bounded subgroup of $G_1$. Then $H$ is contained in an almost connected Lie group by Lemma~\ref{lemma cor KM CM}. Thus $\alpha(H)$ is bounded by Theorem~\ref{thmintro KM} together with Lemma~\ref{lemma parabolics closed}, as desired.  \hspace{\fill} $\Box$

\bibliographystyle{amsalpha} 
\bibliography{paramKMRbib}

\end{document}